\newtheorem{thm}{Theorem}[section]
\newtheorem{cor}[thm]{Corollary}
\newtheorem{lem}[thm]{Lemma}
\newtheorem{prop}[thm]{Proposition}
\newtheorem{claim}[thm]{Claim}
\theoremstyle{definition}
\theoremstyle{remark}
\newtheorem{rem}[thm]{Remark}
\numberwithin{equation}{section}
\begin{document}

\title[Convergence of general inverse $\sigma_k$-flow on K\"{a}hler manifolds with Calabi Ansatz]{Convergence of general inverse $\sigma_k$-flow on K\"{a}hler manifolds with Calabi Ansatz}%
\author{Hao Fang}%
\address{Hao Fang, 14 McLean Hall, Iowa City, IA 52242}%
\email{hao-fang@uiowa.edu}%
\author{Mijia Lai}
\address{Mijia Lai, 915 Hylan Building,
University of Rocheste, RC Box 270138, Rochester, NY 14627}%
\email{lai@math.rochester.edu}%

\begin{abstract}
We study the convergence behavior of the general inverse $\sigma_k$-flow on K\"{a}hler manifolds with initial metrics satisfying the Calabi Ansatz. The limiting metrics can be either smooth or singular. In the latter case, interesting conic singularities along negatively self-intersected sub-varieties are formed as a result of  partial blow-up.
\end{abstract}
\maketitle

\section{Introduction}

Geometric flows are powerful tools to study the metric, algebraic and topological properties of the underlying manifold. An important example is the Ricci flow introduced by Hamilton~\cite{H} three decades ago and its K\"{a}hlarian version, the K\"{a}hler-Ricci flow~\cite{Cao}. Since then, both have developed into significant research fields. See~\cite{T,SW4} for more complete surveys and further references on the K\"{a}hler-Ricci flow.

In~\cite{FLM, FL}, we have introduced the general inverse $\sigma_k$-flow on compact K\"{a}hler manifolds, which is a generalization of the $J$-flow~\cite{Chen,Do,SW1}. In this paper, we study some concrete examples to explore several applications of the general inverse $\sigma_k$-flow in algebraic geometry and fully non-linear partial differential equations.

We recall the definition of the general inverse $\sigma_k$-flow. Let $(X, \omega) $ be a compact K\"{a}hler manifolds of dimension $n$, with a fixed K\"{a}hler form $\omega$. Let $\chi$ be another K\"{a}hler form. For a fixed integer $k\in [1,n]$, let
\begin{align} \notag
c_k={n \choose k}\frac{\int_X \chi^{n-k}\wedge \omega^k}{\int_X \chi^n},
\end{align}
and define
\begin{align} \notag
\sigma_k(\chi)={ n \choose k} \frac{\chi^k \wedge \omega^{n-k}}{\omega^n}.
\end{align}
We are interested in $\sigma_k(\chi)$, which is a function defined  globally on $X$. Locally, it is the $k$-th elementary symmetric polynomial of eigenvalues of $\chi$ with respect to $\omega$,  $\{\lambda_{1},\cdots, \lambda_{n}\}$. In other words:
$$\sigma_k(\chi)=\sigma_k(\lambda_1, \cdots, \lambda_n)=\sum_{1\leq i_1 <i_2<\cdots <i_k\leq n} \lambda_{i_1} \lambda_{i_2} \cdots \lambda_{i_k}.$$

The general inverse $\sigma_k$-flow is defined for $$\chi_{\varphi}=\chi+\frac{\sqrt{-1}}{2}\partial{\bar\partial}\varphi\in\mathcal{P}_{\chi}=\{ \varphi\in C^{\infty}(X) | \chi_{\varphi}:=\chi+\frac{\sqrt{-1}}{2}\partial \bar{\partial} \varphi >0 \},$$
by
\begin{align} \label{inversek}
\left\{
\begin{array}{l l}
\frac{\partial}{\partial t} \varphi(x,t) &=
F(\frac{\sigma_{n-k}(\chi_{\varphi})}{\sigma_n(\chi_{\varphi})})-F(c_k),\\
\varphi(0)& =0,
\end{array}
\right.
\end{align}
where $x\in X,t\in[0,\infty)$, $F\in C^{\infty}(\mathbb{R}_{+}, \mathbb{R})$ satisfies the following conditions:
\begin{align} \label{concavity}
F'(x)<0, \ \quad \  F''(x)\geq 0, \ \quad F''(x)+\frac{F'(x)}{x}\leq 0.
\end{align}
Note that (\ref{concavity}) implies the ellipticity and strong concavity of the flow (\ref{inversek}). See~\cite{FL} for details.

Any stationary point of (\ref{inversek}) corresponds to a metric $\tilde{\chi}\in[\chi]$ satisfying the following K\"{a}hlerian inverse $\sigma_k$ equation on $X$:
\begin{align} \label{critical}
c_k \tilde{\chi}^n = {n\choose k} \tilde{\chi}^{n-k}\wedge \omega^k.
\end{align}
Locally, (\ref{critical}) can be written as
\begin{align} \label{local}
\frac{\sigma_{n-k}(\tilde{\chi})}{\sigma_n(\tilde{\chi})}=\sigma_k(\tilde{\chi}^{-1})=c_k,
\end{align}  where $\tilde{\chi}=\frac{\sqrt{-1}}{2}\tilde{\chi}_{i\bar{j}} dz_1\wedge d\bar{z_j}$ and $\tilde\chi^{-1}$ is the inverse of the matrix $(\tilde\chi_{i\bar{j}})$.

Equation (\ref{local}) leads to an obvious necessary condition for equation(\ref{critical}) to admit a smooth solution, which was first formulated in~\cite{SW1}  for the $J$-flow.
Define
\begin{align} \label{cone}
\mathcal{C}_k(\omega)=&\{ [\chi]>0\, |\,\exists \chi' \in [\chi], \\ \notag
&\text{such that}\, \  nc_k \chi'^{n-1}-{n \choose k}(n-k)\chi'^{n-k-1}\wedge \omega^k>0\, 
\}.
\end{align}
Note that for $k=n$, (\ref{cone}) holds for any K\"{a}hler class. Hence $\mathcal{C}_n(\omega)$ is the entire K\"{a}hler cone.
If there exists a smooth  metric $\tilde{\chi}\in[\chi]$ solving (\ref{critical}), it is necessary that
\begin{align} \label{condition}
[\chi]\in \mathcal{C}_k(\omega).
\end{align}

Following our earlier work with Ma~\cite{FLM}, we have shown that condition (\ref{condition}) is  also sufficient for the existence of smooth solutions of (\ref{local}) via the convergence of the flow (\ref{inversek})~\cite{FL}.

\begin{thm}
Let $(X,\omega,\chi)$ be given as above, then the flow (\ref{inversek}) has long time existence and converges to the critical metric $\tilde{\chi}$ satisfying (\ref{critical}) if and only if $[\chi]\in \mathcal{C}_k(\omega)$.
\end{thm}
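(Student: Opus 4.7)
The plan is to handle the two directions separately, with the easy necessity coming from a pointwise/algebraic argument and the sufficiency proved by deriving uniform a priori estimates along the flow in the spirit of the $J$-flow theory of Song--Weinkove and our earlier work \cite{FLM}. For necessity, suppose a smooth $\tilde\chi \in [\chi]$ solves \eqref{critical}. Differentiating $\sigma_k(\tilde\chi^{-1}) = c_k$ (or equivalently applying the Newton--MacLaurin / Gårding inequality to the positive cone $\Gamma_k$) one checks that the linearized symbol produces the positive $(n-1,n-1)$-form $nc_k\tilde\chi^{n-1} - \binom{n}{k}(n-k)\tilde\chi^{n-k-1}\wedge\omega^k > 0$, so $[\chi]\in \mathcal{C}_k(\omega)$; this is the argument already indicated in the discussion following \eqref{condition}.

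For sufficiency, I would run the standard parabolic bootstrap. Short-time existence is immediate because conditions \eqref{concavity} make the right-hand side of \eqref{inversek} a strictly elliptic, concave operator in the Hessian of $\varphi$. The first key step is to show that the cone condition is preserved: if one starts inside $\mathcal{C}_k(\omega)$, then along the flow there exists $\varphi(t)$ and a uniform constant so that $nc_k\chi_\varphi^{n-1}-\binom{n}{k}(n-k)\chi_\varphi^{n-k-1}\wedge\omega^k \geq \delta \chi_\varphi^{n-1}$ pointwise; this follows by applying the maximum principle to an appropriately chosen test quantity involving $\mathrm{tr}_{\chi_\varphi}\omega$ and the evolving potential, using that the cone inequality rules out degeneracy in the linearized operator.

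The next step is the chain of a priori estimates. A $C^0$ bound on $\varphi$ (after normalizing by average) is obtained either by a Moser iteration or an ABP/Alexandroff argument, leveraging the preserved cone inequality. The $C^2$ bound is the main obstacle: one applies the maximum principle to $H = \log\lambda_{\max}(\chi_\varphi) - A\varphi$ for large $A$, and the preserved cone condition is precisely what turns the bad third-order terms into a controllable negative contribution, yielding $\|\chi_\varphi\|_{C^0}\leq C$. Concavity of $F$ and of $\sigma_k^{1/k}$-type combinations then allows the Evans--Krylov theorem to upgrade to $C^{2,\alpha}$, and Schauder plus parabolic bootstrap give uniform $C^k$ bounds for all $k$. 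Long-time existence is an immediate consequence.

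Finally, convergence is obtained from a monotone energy functional: the flow \eqref{inversek} is the downward gradient flow of a Donaldson--Chen--type functional $\mathcal{J}_{F,\chi,\omega}$ on $\mathcal{P}_\chi$, as established in \cite{FL}, and $\frac{d}{dt}\mathcal{J} \leq -\int_X (\dot\varphi)^2 \chi_\varphi^n$ combined with the uniform higher-order estimates forces $\dot\varphi\to 0$ in $C^\infty$ along a subsequence; uniqueness of the critical metric in $\mathcal{C}_k(\omega)$ (modulo constants) upgrades subsequential convergence to full convergence of the normalized potentials to a limit $\varphi_\infty$, and hence $\chi_{\varphi(t)} \to \tilde\chi$ smoothly, finishing the proof. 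I expect the preservation of the cone condition and the resulting $C^2$ estimate to be the technically delicate point, since the general $F$ introduces extra nonlinear factors that must be absorbed using the second and third conditions in \eqref{concavity}.
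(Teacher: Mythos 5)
First, a point of orientation: this theorem is not proved in the present paper at all --- it is quoted from \cite{FLM} and \cite{FL}, so there is no in-paper argument to match your outline against. Judged against the actual proof in those references, your necessity direction is correct and is essentially the standard pointwise argument: if $\tilde\chi$ solves \eqref{critical} then, writing $\mu_i$ for the eigenvalues of $\omega$ with respect to $\tilde\chi$, one has $\sigma_k(\mu)=c_k$ and hence $\sigma_k(\mu)-\mu_i\sigma_{k-1}(\mu|i)=\sigma_k(\mu|i)>0$ for each $i$, which is exactly the positivity of $nc_k\tilde\chi^{n-1}-\binom{n}{k}(n-k)\tilde\chi^{n-k-1}\wedge\omega^k$; so $\tilde\chi$ itself is the required element of $[\chi]$.

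The genuine gap is in your sufficiency argument, specifically the step you yourself flag as delicate: the ``preservation of the cone condition'' in the form $nc_k\chi_\varphi^{n-1}-\binom{n}{k}(n-k)\chi_\varphi^{n-k-1}\wedge\omega^k\geq\delta\chi_\varphi^{n-1}$ pointwise along the flow. This is not what the proof in \cite{FLM,FL} (or \cite{SW1} for the $J$-flow) establishes, and it is not known to hold; nothing in the definition \eqref{cone} gives you a dynamically propagated inequality for the evolving metric. What the cone condition actually supplies is a \emph{fixed, static} subsolution $\chi'\in[\chi]$, and the entire second-order estimate is run against that reference: one applies the maximum principle to a quantity of the shape $e^{-A(\varphi-\varphi')}\,\mathrm{tr}_\omega\chi_\varphi$ (with $\chi'=\chi+\frac{\sqrt{-1}}{2}\partial\bar\partial\varphi'$), and it is the pointwise inequality satisfied by $\chi'$ --- not by $\chi_\varphi$ --- that kills the bad terms. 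Since your $C^2$ bound is predicated on the preserved-cone statement, and your $C^0$ bound is in turn ``leveraging the preserved cone inequality,'' the chain of estimates as written does not close. Relatedly, the order of the estimates in the actual proof is the reverse of the Monge--Amp\`ere--style bootstrap you describe: the second-order bound is proved first, depending on $\mathrm{osc}\,\varphi$, and the $C^0$ bound is then extracted using that estimate together with the two-sided bound \eqref{1.8} on $\sigma_{n-k}(\chi_\varphi)/\sigma_n(\chi_\varphi)$ coming from the maximum principle applied to $\dot\varphi$; a bare Moser/ABP argument is not what is used under the sharp condition \eqref{condition}. The remaining pieces of your sketch (long-time existence from \eqref{concavity}, Evans--Krylov and Schauder, convergence via the gradient-flow functional and uniqueness of the critical metric) are consistent with \cite{FL}, though there the decay $\dot\varphi\to 0$ is obtained directly rather than by subsequence-plus-uniqueness.
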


It is easy to see that for $k=1$ and $F(x)=-x$, (\ref{inversek}) reduces to the $J$-flow. The smooth convergence of the $J$-flow has significant geometric implication on the properness of Mabuchi energy (see ~\cite{SW1}). Another special case of the flow occurs when $k=n$, where the critical equation is a complex Monge-Amp\`{e}re equation. If one takes $F(x)=-\log x$,  then the flow resembles K\"{a}hler-Ricci flow.

Same as the $J$-flow, the general inverse $\sigma_k$-flow (\ref{inversek}) always has long time existence (see ~\cite{FL}). It is thus interesting to study  convergence properties of the flow when the condition (\ref{condition}) fails to hold. While convergence to smooth metrics is no longer expected, due to the geometric set-up, blow-up behavior of the solution along proper subvarieties is expected (cf.~\cite{SW1}). In particular, it is our hope that the analytical behavior of the limit metric will reflect the algebro-geometric properties of the original K\"{a}hler manifold $X$.

From an analytical point of view, (\ref{critical}) deserves study in its own right. For $k=n$, it is a complex Monge-Amp\`{e}re equation. If $[\chi]$ is K\"{a}hler, by Yau's renowned solution of Calabi conjecture~\cite{Y}, (\ref{critical}) admits a smooth solution unique up to a constant. If $[\chi]$ lies on the boundary of K\"{a}hler cone, i.e., $[\chi]$ is nef not K\"{a}hler, then (\ref{critical}) becomes a degenerate complex Monge-Amp\`{e}re equation. It is a subject of intensive study over past two decades following the pioneering work of Ko{\l}odziej~\cite{K}. When $\chi$ is a big semi-positive form, the result in~\cite{EGZ} implies that (\ref{critical}) admits a bounded plurisubharmonic solution. Such solution of degenerate complex Monge-Amp\`{e}re equation is used to produce singular K\"{a}hler-Einstein metrics on K\"{a}hler manifolds with indefinite anticanonical class ~\cite{EGZ}. There have been K\"{a}hler-Ricci flow approaches for more general canonical singular K\"{a}hler-Einstein metrics. See~\cite{TZ, ST1, ST2} for details and further references.

For $k\neq n$, we refer (\ref{critical}) as the K\"{a}hlerian inverse $\sigma_k$ equation. When $[\chi]$ lies on the boundary of $\mathcal{C}_k(\omega)$, this Monge-Amp\`{e}re type equation degenerates in an intriguing way. Suggested by the convergence results we obtained on general inverse $\sigma_k$ flow, we conjecture an analogous result of ~\cite{EGZ} on boundedness of the solution in pluri-potential sense still holds.

In this paper, we shall study the general inverse $\sigma_k$-flow assuming certain symmetry of initial data. This is partially inspired by similar results on the K\"{a}hler-Ricci flow~\cite{SW2,SW3,SY} and on K\"{a}hler-Ricci solitons~\cite{L}.  It is interesting to compare convergence behaviors of the general inverse $\sigma_k$-flow with those of the K\"{a}hler-Ricci flow.

\begin{thm}[\textbf{Main Theorem 1}]\label{main1}
Let $X=\mathbb{P}^n \#\overline{\mathbb{P}^n}$ be $\mathbb{P}^n$ blowing up at one point. Let $E_0$ and $E_{\infty}$ be the exceptional divisor and the pull-back of the divisor associated to  $\mathcal{O}_{\mathbb{P}^n}(1)$ respectively. Assume that $\chi$, $\omega$ are  K\"{a}hler metrics on $M$ satisfying Calabi Ansatz (see Section 2 for details) such that
\[
\omega \in \alpha[E_{\infty}]-[E_0],\quad \chi \in \beta[E_{\infty}]-[E_0].
\]
Let $\chi_t$ be the solution of the flow (\ref{inversek}), then the following convergence behavior of $\chi_t$ holds:
\begin{enumerate}
  \item If $\frac{\alpha^k \beta^{n-k}-1}{\beta^n-1} >\frac{n-k}{n}$, then as $t\to\infty$, $\chi_t\xrightarrow{C^{\infty}(X)}\chi_{\infty}$,  a smooth K\"{a}hler metric satisfying (\ref{critical}).
  \item If $\frac{\alpha^k \beta^{n-k}-1}{\beta^n-1} =\frac{n-k}{n}$, then as $t\to\infty$, $\chi_t\xrightarrow{C^{\infty}(X\setminus E_0)} \chi_{\infty}$,  a singular K\"{a}hler metric that is smooth
 away from $E_{0}$ and has  conic singularity at $E_0$ of angle $\pi$. Further more, there is a universal constant $C$ such that the oscillation of the limiting potential $\varphi_{\infty}$ satisfies $$\text{osc}\  \varphi_{\infty}\leq C.$$
   \item If $\frac{\alpha^k \beta^{n-k}-1}{\beta^n-1} <\frac{n-k}{n}$, then as $t\to\infty$, $\chi_t\xrightarrow{C^{\infty}(X\setminus E_0)} \chi_{\infty}+(\lambda-1)[E_0]$, a K\"{a}hler current. Here $\lambda\in(1,\beta)$ is unique such that
\begin{align} \label{lambda}
(n-k)(\frac{\beta}{\lambda})^{k}+k(\frac{\lambda}{\beta})^{n-k}=n\alpha^{k},
\end{align}
and $[E_0]$ is the current of integration along the exceptional divisor $E_0$. As is in case (2),  $\chi_{\infty}\in \beta[E_{\infty}]-\lambda[E_0]$ is also a singular K\"{a}hler metric with conic singularity with angle $\pi$ transverse to $E_0$.
\end{enumerate}
\end{thm}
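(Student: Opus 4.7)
The plan is to exploit the $U(n)$-symmetry imposed by the Calabi Ansatz to reduce the flow (\ref{inversek}) to a scalar parabolic ODE on a real interval, and then to analyze its long-time behavior regime by regime. Viewing $X=\mathbb{P}^n\#\overline{\mathbb{P}^n}$ as the projective completion of $\mathcal{O}_{\mathbb{P}^{n-1}}(-1)$, the Calabi Ansatz represents $\omega$ and $\chi_{\varphi(\cdot,t)}$ via smooth convex radial profiles $\psi(s)$ and $\phi(s,t)$ in the logarithmic radial coordinate $s$. In these coordinates the eigenvalues of $\chi_\varphi$ relative to $\omega$ split into one radial eigenvalue $\mu_r=\phi''/\psi''$ and $(n-1)$ coincident angular eigenvalues $\mu_a=\phi'/\psi'$, so that (\ref{inversek}) becomes
\[
\partial_t\phi\;=\;F\!\left(\tbinom{n-1}{k}\mu_a^{-k}+\tbinom{n-1}{k-1}\mu_r^{-1}\mu_a^{-(k-1)}\right)-F(c_k).
\]

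Computing intersection numbers on $X$ using $E_\infty^n=1$, $E_0^n=(-1)^{n-1}$, and $E_\infty^i\cdot E_0^{n-i}=0$ for $0<i<n$, one finds $\int_X\chi^n=\beta^n-1$ and $\int_X\chi^{n-k}\wedge\omega^k=\alpha^k\beta^{n-k}-1$, hence $c_k=\binom{n}{k}\frac{\alpha^k\beta^{n-k}-1}{\beta^n-1}$. Within the Calabi Ansatz family, the cone condition $[\chi]\in\mathcal{C}_k(\omega)$ of (\ref{cone}) translates into the boundary inequality $\frac{\alpha^k\beta^{n-k}-1}{\beta^n-1}>\frac{n-k}{n}$, with the binding constraint coming from the limit $\mu_a\to 1$ at the endpoint corresponding to $E_0$. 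Case~(1) is therefore an immediate consequence of the preceding general convergence theorem.

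For cases~(2) and (3) the $\mathcal{C}_k$-hypothesis fails and one must work directly with the reduced ODE. I would first establish uniform $C^0$ control of $\phi$ by a maximum-principle/barrier argument on the scalar parabolic equation, using monotonicity of $F$ and the convexity of $\phi$ in $s$. On $X\setminus E_0$, the angular eigenvalue $\mu_a$ remains bounded uniformly away from the degenerate value, so the higher-order estimates developed in \cite{FL} apply locally and yield $C^\infty$-convergence of $\chi_t$ on $X\setminus E_0$ to a limit $\chi_\infty$ solving (\ref{critical}) there; uniqueness of radial stationary profiles then upgrades subsequence convergence to full convergence.

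The main obstacle is identifying the limiting profile at $E_0$ and the resulting global structure of $\chi_\infty$. In case~(2) a boundary ODE analysis produces a unique stationary profile with $\mu_a\to 1$ at the $E_0$-endpoint, whose second-order asymptotics there yield a transverse conic singularity of angle $\pi$; an explicit first integral of the stationary ODE gives the uniform bound on $\operatorname{osc}\varphi_\infty$. In case~(3) no stationary profile with the original endpoint data exists; one shows instead that the flow drives the effective boundary outward to a shifted endpoint, producing a limit that represents the shifted class $\beta[E_\infty]-\lambda[E_0]$ for some $\lambda>1$. The value of $\lambda$ is determined by integrating the stationary ODE and matching the resulting cohomological integrals to those forced by $[\chi]$, which yields precisely (\ref{lambda}); strict monotonicity of its left-hand side in $\lambda$ gives uniqueness on $(1,\beta)$. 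Finally, the divisorial contribution $(\lambda-1)[E_0]$ in the limiting current is identified via Lelong--Poincar\'e: the limiting potential behaves asymptotically like $(\lambda-1)$ times the logarithmic distance to $E_0$, and $\tfrac{\sqrt{-1}}{2}\partial\bar\partial$ of that boundary profile produces exactly the claimed singular current.
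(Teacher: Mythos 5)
Your overall strategy (symmetry reduction to a scalar parabolic problem, case analysis against the cone condition, smooth convergence away from $E_0$, conic singularity and mass concentration at $E_0$) is the same as the paper's, but there are two genuine gaps. The first concerns the selection of the limit profile. You assert that ``uniqueness of radial stationary profiles upgrades subsequence convergence to full convergence'' and that a ``boundary ODE analysis produces a unique stationary profile.'' But the stationary problem here is degenerate: in the paper's variables it reads $\Psi_{\{1<f<\alpha\}}\bigl(f''+(n-1)f'/x-(n-1)f/x^2\bigr)=0$ with $f(1)=1$, $f(\beta)=\alpha$, and it admits a whole family of monotone solutions of the form (\ref{1.13}) --- constant pieces glued at arbitrary matching points to a solution of (\ref{ode1}). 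Uniqueness fails, so your argument cannot identify which stationary solution the flow selects. The paper closes this by (i) Lemma~\ref{lemma1.5}, showing that two solutions of the flow remain at bounded $C^0$ distance so the limit is independent of the initial potential, and (ii) choosing a linear initial profile for which $\partial_t f\ge 0$ and $f$ is comparable to the explicit solution $\tilde f$ by the strong maximum principle; sandwiching then pins down the limit, and the free-boundary condition $g'(\lambda)=0$ is precisely what yields (\ref{lambda}). Without an equivalent mechanism (for instance the obstacle-problem characterization the paper remarks on), your case (3) claim that ``the flow drives the effective boundary outward to a shifted endpoint'' is an assertion rather than a proof.

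Second, the analytic device that makes the above tractable is absent from your write-up. The paper does not evolve the radial potential directly; it performs the hodograph-type change of variables $f(v'(\rho,t),t)=u'(\rho)$ of (\ref{cartesion}), converting the flow into the degenerate quasilinear equation (\ref{evolutionf}) on the \emph{fixed} interval $[1,\beta]$ with fixed Dirichlet data, and for general $k$ the further substitution $g=f^k$ linearizes the stationary operator so that its solutions are explicitly $ax^{k}+bx^{-(n-k)}$ --- this is where the explicit constant in (\ref{lambda}) comes from. In your coordinate $s$ the degeneracy sits at $s=-\infty$ and neither the classification of stationary profiles nor the ``shifted endpoint'' has a clean meaning. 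A smaller but real imprecision: the eigenvalue that degenerates near $E_0$ and obstructs the trace bound is the radial one $v''/u''=1/(\partial f/\partial x)$, not the angular one $\mu_a=v'/u'$ (which is automatically bounded); the local $C^\infty$ convergence on $X\setminus E_0$ requires the lower bound $\partial f/\partial x\ge c(\epsilon)$ of the paper's Claim~\ref{new}, which your proposal does not supply.
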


\begin{rem}\label{remark1} Note that in the case (3) of Theorem~\ref{main1}, $\chi_{\infty}$ can be also obtained as  the limit of flow (\ref{inversek}) from some smooth initial data  $(X, \omega, \chi)$ where $\chi \in \beta[E_{\infty}]-\lambda[E_0]$ and $\omega\in \alpha[E_{\infty}]-[E_0]$. This is an interesting example of partial blow-up in the sense of algebraic geometry using analytical tools. See~\cite{SW2, SW3, SY} for some corresponding results for the K\"{a}hler-Ricci flow.
\end{rem}

\begin{rem}
For the $J$-flow on K\"{a}hler surface ($k=1, n=2$),  the cone condition (\ref{cone}) reduces to a simple class condition:
\begin{align} \label{1.5}
[c_1\chi-\omega]>0.
\end{align}
Donaldson ~\cite{Do} noticed such condition holds for all K\"{a}hler classes $[\chi]$ and $[\omega]$ if there are no curves of negative self-intersection. He also conjectured if (\ref{1.5}) fails to hold then one might expect the flow to blow up over some such curves. This was confirmed in ~\cite{SW1} in the sense that the quantity $|\varphi|+|\Delta_{\omega}\varphi|$ blows up, where $\varphi$ is the solution of the $J$-flow. In fact, their estimate is on any dimension $n$ when the condition (\ref{cone}) is violated. They posed a question on improving these estimates. Our results give a partial answer to their question. In particular, case (2) of the theorem asserts that for the given $X$ with initial metrics satisfying Calabi ansatz, $|\varphi|$ stays bounded.
\end{rem}

Figure~\ref{fig1} illustrates different cases of Theorem~\ref{main1}. In Figure~\ref{fig1}, the K\"{a}hler cone $\mathcal{K}(X)$ is given by the region $\{ b[E_{\infty}]-a[E_0]| b>a>0\}$, i.e., the upper half quadrant; while $\mathcal{C}_k(\omega)$ defined by (\ref{cone}) is the cone between two dotted lines.  $\beta_0$ is the constant satisfying $\frac{\beta_0^{n-k} \alpha^k-1}{\beta_0^n-1}=\frac{n-k}{n}$, which defines the boundary of $\mathcal{C}_{k}(\omega)$ inside of $\mathcal{K}(X)$. Therefore case (2) of Theorem~\ref{main1} corresponds to $[\chi]$ lying on the upper boundary of $\mathcal{C}_{k}(\omega)$; case (3) of Theorem~\ref{main1} corresponds to $[\chi]\in \mathcal{K}(X)\setminus \overline{\mathcal{C}_{k}(\omega)}$. The limit $\chi_{\infty}$ of case (3) jumps to the class $\beta[E_{\infty}]-\lambda[E_0]$ which lies on the upper boundary of $\mathcal{C}_k(\omega)$. The relation (\ref{lambda}) then follows.

\begin{figure}[htpb!]\label{fig1}
\begin{center}

\psfrag{9}{$-[E_0]$}
\psfrag{3}{$\alpha$}
\psfrag{4}{$\beta_0$}
\psfrag{5}{$\beta$}
\psfrag{10}{$[E_\infty]$}
\psfrag{8}{$\mathcal C_k(\omega)$}
\psfrag{1}{$1$}
\psfrag{2}{$\lambda$}
\psfrag{7}{$[\omega]$}
\psfrag{6}{$[\chi]$}
\psfrag{11}{$[\chi_{\infty}]$}
\includegraphics[height=60mm]{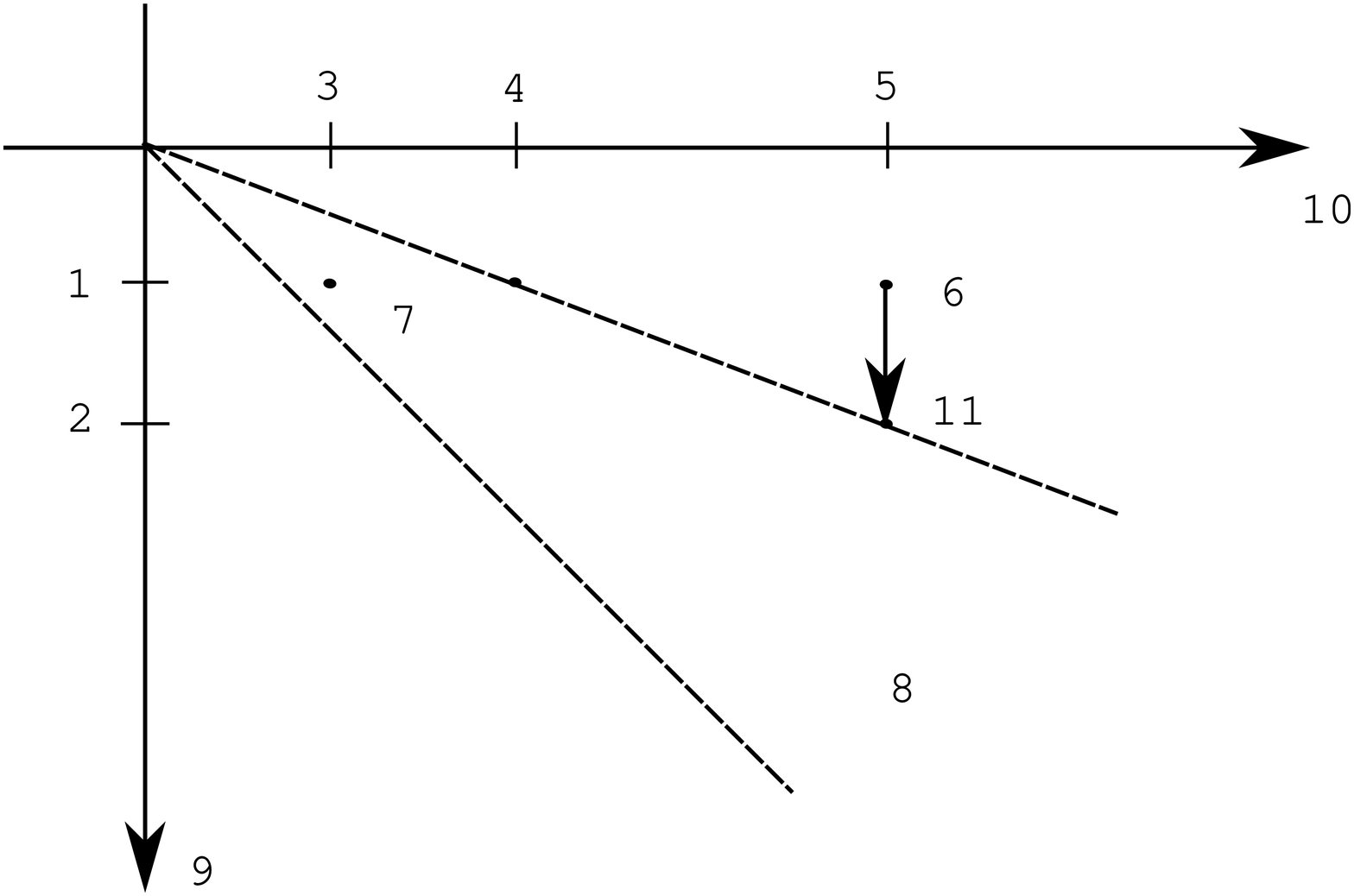}

\caption{}\label{graph1}
\end{center}
\end{figure}

Note the constant $\frac{\alpha^k \beta^{n-k}-1}{\beta^n-1}$ in the Theorem~\ref{main1} is in fact the topological constant
\[
\frac{\int_X \chi^{n-k}\wedge \omega^k}{\int_X \chi^n}=\frac{[\chi]^{n-k}[\omega]^k}{[\chi]^n}.
\]
Hence by Theorems 1.1 and 1.2, we obtain

\begin{cor} Fix the notation as in Theorem~\ref{main1}. Let $\omega \in \alpha[E_{\infty}]-[E_0] $ and  $\chi \in \beta [E_{\infty}]- [E_0]$. Assume that $\omega$ satisfies the Calabi Ansatz. Then $[\chi] \in \mathcal{C}_{k}(\omega)$ if and only if $\frac{\int_X \chi^{n-k} \wedge \omega^k}{\int_X \chi^{n}}=\frac{\alpha^k \beta^{n-k}-1}{\beta^n-1} > \frac{n-k}{n}$. In particular, $\mathcal{C}_k(\omega)$ is a convex open cone.
\end{cor}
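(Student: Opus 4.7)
The plan is to combine Theorem~1.1 with Theorem~\ref{main1} to characterize $\mathcal{C}_{k}(\omega)$ under the Calabi Ansatz, and then carry out an elementary one-variable computation to deduce convexity. Recall that Theorem~1.1 asserts $[\chi]\in\mathcal{C}_{k}(\omega)$ is equivalent to smooth convergence of the flow (\ref{inversek}) starting from \emph{any} initial representative of $[\chi]$. Within the class $\beta[E_{\infty}]-[E_{0}]$, one always has a Calabi Ansatz representative $\chi'$ (this is part of the construction reviewed in Section~2), which I would take as the initial datum. By Theorem~\ref{main1}, the flow starting from $\chi'$ converges smoothly precisely in case~(1), i.e.\ when $\frac{\alpha^{k}\beta^{n-k}-1}{\beta^{n}-1}>\frac{n-k}{n}$. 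Both directions of the equivalence then follow: if the inequality holds, Theorem~\ref{main1}(1) gives a smooth limit in $[\chi]$, hence $[\chi]\in\mathcal{C}_{k}(\omega)$ by Theorem~1.1; if $[\chi]\in\mathcal{C}_{k}(\omega)$, Theorem~1.1 forces smooth convergence of the Calabi Ansatz flow, ruling out cases (2) and (3) of Theorem~\ref{main1} and leaving only case~(1).

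For the convexity assertion I would first observe that $\mathcal{C}_{k}(\omega)$ is scale-invariant: both sides of the defining inequality in (\ref{cone}) are homogeneous of the same degree in $\chi'$, so replacing $\chi'$ by $s\chi'$ (which also rescales $c_k$ by $s^{-k}$) preserves positivity. Every K\"ahler class on $X=\mathbb{P}^{n}\#\overline{\mathbb{P}^{n}}$ has the form $b[E_{\infty}]-a[E_{0}]$ with $b>a>0$, and scaling by $1/a$ reduces to $\beta[E_{\infty}]-[E_{0}]$ with $\beta=b/a$, where the first part of the corollary applies. Clearing denominators, the condition $\alpha^{k}\beta^{n-k}-1>\frac{n-k}{n}(\beta^{n}-1)$ becomes the homogeneous polynomial inequality
\[
\phi(a,b):=n\alpha^{k}a^{k}b^{n-k}-ka^{n}-(n-k)b^{n}>0.
\]
Setting $t=a/b\in(0,1)$ and $h(t):=n\alpha^{k}t^{k}-kt^{n}-(n-k)$, I would compute $h'(t)=nk\,t^{k-1}(\alpha^{k}-t^{n-k})$, which is strictly positive on $(0,1)$ since $\alpha>1$ (forced by K\"ahlerness of $\omega$). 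Because $h(0)=-(n-k)<0$ and $h(1)=n(\alpha^{k}-1)>0$, there is a unique $t_{0}\in(0,1)$ with $h(t_{0})=0$, and $\{t\in(0,1):h(t)>0\}=(t_{0},1)$. Consequently, $\mathcal{C}_{k}(\omega)\cap\mathcal{K}(X)=\{b[E_{\infty}]-a[E_{0}]:t_{0}b<a<b\}$, manifestly a convex open cone.

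Since the corollary essentially chains Theorems~1.1 and~\ref{main1} through scale-invariance, there is no substantive analytic obstacle; the only ingredient beyond the cited theorems is the existence of a Calabi Ansatz representative in each class $\beta[E_{\infty}]-[E_{0}]$, which is recorded in Section~2. The mildest point requiring care is verifying scale-invariance of (\ref{cone}) and the homogeneity rewrite, after which the convexity reduces to the monotonicity of the one-variable function $h$ above.
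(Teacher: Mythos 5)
Your proposal is correct and follows essentially the same route as the paper, which simply derives the corollary by chaining Theorem~1.1 (smooth convergence of the flow is equivalent to $[\chi]\in\mathcal{C}_k(\omega)$, a class-level statement independent of the chosen representative) with the trichotomy of Theorem~\ref{main1} applied to a Calabi Ansatz representative of $[\chi]$. Your explicit homogenization $\phi(a,b)=n\alpha^k a^k b^{n-k}-ka^n-(n-k)b^n$ and the monotonicity of $h(t)=n\alpha^k t^k-kt^n-(n-k)$ on $(0,1)$ correctly supply the convexity verification that the paper leaves implicit.
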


In order to get examples involving singularities of higher co-dimension, we study the flow (\ref{inversek}) on more complicated manifolds admitting the Calabi Ansatz.

Let $X_{m,n}=\mathbb{P}(\mathcal{O}_{\mathbb{P}^n}\oplus \mathcal{O}_{\mathbb{P}^n}(-1)^{\oplus (m+1)})$ be a projective bundle over $\mathbb{P}^n$ of total dimension $m+n+1$. Let $D_{\infty}$ be the divisor given by $\mathbb{P}(\mathcal{O}_{\mathbb{P}^n}(-1)^{\oplus (m+1)})$ and $D_{H}$ be the pullback of the divisor on $\mathbb{P}^n$ associated to $\mathcal{O}_{\mathbb{P}^n}(1)$.

For future use, let $\tilde{X}_{m,n}$ be the blow up of $X_{m,n}$ along $P_0$, where $P_0\subset X_{m,n}$ is the projectivization of the section $(1,0,\cdots, 0)\in \mathcal{O}_{\mathbb{P}^n}\oplus \mathcal{O}_{\mathbb{P}^n}(-1)^{\oplus (m+1)}$. Note that $P_{0}$ is of dimension $n$. We denote the resulting exceptional divisor in  $\tilde{X}_{m,n}$ by $E$.

\begin{thm}[\textbf{Main Theorem 2}]\label{main2}
Let $X_{m,n}$ be as given as above. Assume that $\omega, \chi$ are two K\"{a}hler metrics satisfying the Calabi Ansatz and
\[
\omega \in [D_H]+b[D_{\infty}]\quad \text{and} \quad \chi\in [D_H]+b'[D_{\infty}],
\]where $[D_{H}]$ and $[D_{\infty}]$ are two generators of the divisor group of $X_{m,n}$.
Let $\chi_t$ be the solution of the general inverse $\sigma_k$-flow (\ref{inversek}), then we have the following:
\begin{description}
  \item[ $k>n$] $\chi_t\xrightarrow{C^{\infty}(M)}\chi_{\infty}$,  a smooth K\"{a}hler metric solving (\ref{critical}), as $t\to\infty$.\\
  \item[ $k\leq n$] Recall $c_k=\frac{{m+n+1 \choose k} \int_{X_{m,n}} \chi^{m+n+1-k}\wedge \omega^k}{\int_{X_{m,n}}\chi^{m+n+1}}$, we have:                \begin{enumerate}
                   \item $c_k>{n \choose k}$: $\chi_t\xrightarrow{C^{\infty}(M)}\chi_{\infty}$,  a smooth K\"{a}hler metric solving (\ref{critical}), as $t\to\infty$.
                    \item $c_k={n \choose k}$: $\chi_t\xrightarrow{C^{\infty}_{loc}(X_{m,n}\setminus P_0)} \chi_{\infty}$, a singular K\"{a}hler metric with cone singularity of angle $\pi$ transverse to $P_0$ as $t\to\infty$; there is a universal contant $C$ such that $$\text{osc}\ \varphi_{\infty}\leq C.$$
                   \item $c_k<{n \choose k}$: Let $\pi$ be the blown up map: $\pi: \tilde{X}_{m,n} \to X_{m,n}$. Then $\pi^{*} (\chi_t)\xrightarrow{C^{\infty}_{loc}(X_{m,n}\setminus P_0)} \chi_{\infty}+\lambda[E]$, as $t\to\infty$. Here $[E]$ is the current of integration along $E$; $\lambda \in (0, b')$ is a constant uniquely determined by the geometric data; $\chi_{\infty}\in [D_{H}]+b'[D_{\infty}]-\lambda[E]$ is a singular K\"{a}hler metric on $\tilde{X}_{m,n}$ with cone singularity of angle $\pi$ transverse to fibre direction of $E$ such that on ${\tilde X}_{m,n}\setminus E$ \[
                       \alpha \chi_{\infty}^{m+n+1}=\chi_{\infty}^{m+n+1-k}\wedge \pi^{*}(\omega)^k,
                       \]
where
                       \[
                       \alpha=\frac{([D_{H}]+b'[D_{\infty}]-\lambda[E])^{m+n+1-k}([D_{H}]+b'[D_{\infty}])^k}{([D_{H}]+b'[D_{\infty}]-\lambda[E])^{m+n+1}}.\]                   \end{enumerate}
\end{description}

\end{thm}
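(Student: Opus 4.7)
The plan is to exploit the $SU(n+1)\times U(m+1)$ symmetry underlying the Calabi Ansatz on $X_{m,n}$ to reduce the flow (\ref{inversek}) to a one-variable parabolic problem, and then adapt the strategy used for Theorem~\ref{main1} with three rather than two groups of eigenvalues. First I would write both $\omega$ and $\chi$ as Calabi metrics with potentials depending only on a single real variable $\rho$ (for instance $\rho=\log|z|^{2}$ on the complement of $P_{0}\cup D_{\infty}$). Since the symmetry is preserved under the flow, the solution $\varphi(x,t)$ depends only on $\rho$ and $t$, and the eigenvalues of $\chi_{\varphi}$ with respect to $\omega$ split into three groups: $n$ equal eigenvalues coming from the base $\mathbb{P}^{n}$, $m$ equal eigenvalues from the fibre directions transverse to $\rho$, and one distinguished eigenvalue in the $\rho$ direction. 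Substituting into $\sigma_{n-k}/\sigma_{n}$ reduces (\ref{inversek}) to a strictly parabolic scalar equation in $(\rho,t)$ with boundary conditions at $\rho\to\pm\infty$ dictated by smoothness of $\chi_{\varphi}$ across $P_{0}$ and $D_{\infty}$.

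Next I would establish long-time existence and uniform bounds on the reduced flow by applying a maximum principle to the ODE profile, paralleling the estimates in~\cite{FL} but sharpened by the Calabi symmetry. The trichotomy in the statement then emerges from inspecting the stationary ODE for (\ref{critical}): a smooth Calabi-symmetric critical metric in $[D_{H}]+b'[D_{\infty}]$ exists precisely when $c_{k}>{n\choose k}$; the equality $c_{k}={n\choose k}$ is exactly the integrated cone-boundary condition, and in this regime the endpoint data at the $P_{0}$ side pinches, producing a uniformly bounded but non-smooth limit with conic behavior transverse to $P_{0}$. In the strict inequality case $c_{k}<{n\choose k}$, no Calabi-symmetric smooth critical metric exists in $[D_{H}]+b'[D_{\infty}]$; one passes to the blow-up $\pi\colon\tilde{X}_{m,n}\to X_{m,n}$ along $P_{0}$, and the constant $\lambda\in(0,b')$ is determined uniquely by requiring that $[D_{H}]+b'[D_{\infty}]-\lambda[E]$ lies on the upper boundary of $\mathcal{C}_{k}(\pi^{*}\omega)$, equivalently that the pulled-back ODE admits a smooth endpoint matching at the exceptional divisor $E$. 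This procedure directly yields the formula for $\alpha$ in the statement, and is the higher-codimensional analogue of how $\lambda$ was read off from (\ref{lambda}) in the proof of Theorem~\ref{main1}.

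The principal obstacle I anticipate is case (3). One must control $\chi_{t}$ locally uniformly on $X_{m,n}\setminus P_{0}$ while its potential acquires logarithmic blow-up along $P_{0}$, show that $\pi^{*}\chi_{t}-\lambda[E]$ descends to a smooth family on $\tilde{X}_{m,n}\setminus E$ with a uniform limit, and verify via ODE asymptotics as $\rho\to-\infty$ that the limiting metric has conic angle exactly $\pi$ transverse to the fibre direction of $E$. A secondary subtlety is that $P_{0}\subset X_{m,n}$ has codimension $m+1$, so one has to track the separate rates of the $m$ fibre eigenvalues and of the $\rho$-eigenvalue near $P_{0}$ in order to rule out a mixed-type degeneration and to confirm that the singularity is transverse conic along the $\mathbb{P}^{m}$-fibres of $E\to P_{0}$. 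Once these asymptotic estimates are in hand, the $C^{\infty}_{loc}$ convergence statements and the current identities for $\chi_{\infty}$ follow from standard arguments that parallel the case $k=n$ on $\tilde{X}_{m,n}$ combined with the convergence mechanism already developed for Theorem~\ref{main1}.
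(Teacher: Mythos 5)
Your outline follows the paper's strategy at the level of architecture: reduce by Calabi symmetry to a scalar equation for the profile, read the trichotomy off the stationary problem, and handle $c_k<{n\choose k}$ by a free boundary that is reinterpreted as a blow-up along $P_0$. But there is a genuine gap at the step you describe as ``inspecting the stationary ODE.'' With three groups of eigenvalues $\bigl(\tfrac{1+f}{1+x}$ with multiplicity $n$, $\tfrac{f}{x}$ with multiplicity $m$, and $f'\bigr)$, the stationary equation $\bigl(\sigma_k(\cdot)\bigr)'=0$ is a fully nonlinear second-order ODE, and nothing in your plan explains how to solve it, prove uniqueness of the monotone solution, locate the threshold at exactly ${n\choose k}$, or set up the free-boundary system determining $\lambda$. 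The paper's essential device is the generating function $G^{m,n}_a(x)=\int_0^x t^m(t+a)^n\,dt$ and the identity $\frac{d}{dx}G^{m,n}_{1+t}(x+tf)=[G^{m,n}_1(x)]'\sum_j t^j\sigma_j(\cdot)$, whose $k$-th Taylor coefficient integrates the stationary equation to the algebraic relation $G^{m,n,k}_1(f,x)=\alpha G^{m,n}_1(x)+\beta$. Everything you need then becomes explicit: positivity of the solution forces $\alpha G^{m,n}_1(x)\geq a_0(x)={n\choose k}G^{m,n-k}_1(x)$, which is precisely $c_k\geq{n\choose k}$; the sign of $f'(0)$ is read off from the lowest-order expansion; and the triple $(\alpha,\beta,\lambda)$ in case (3) is pinned down by a $3\times 3$ system. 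Moreover, the same substitution converts the evolution equation into a degenerate quasilinear parabolic equation for $G^{m,n,k}_1(f,x)$, which is how the convergence mechanism of Section 2 is transplanted; without it your ``maximum principle on the ODE profile'' has no equation of the right structure to act on. Note also that the reduced equation is degenerate, not strictly parabolic, at the endpoints (the coefficient $u''$ vanishes there), and this degeneracy is what permits the limit profile to coincide with the obstacle $f\equiv 0$ on $[0,\lambda]$.

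A second, smaller omission: the a priori gradient bound $\partial f/\partial x\leq C$, which underlies all the $C^\infty_{loc}$ convergence claims, does not follow uniformly across cases. For $k\leq n$ it drops out of the two-sided bound on $\sigma_k$ because the term $\bigl(\tfrac{1+f}{1+x}\bigr)^{k-1}f'$ appears with a coefficient bounded below; but for $k>n$ one must first prove a uniform positive lower bound on $f_t(x)/x$ (by evaluating $\sigma_k$ at the minimum point of $f_t(x)/x$, where $f_t'=f_t/x$) before the upper bound on $f_t'$ can be extracted. Your plan does not distinguish these cases, and the $k>n$ branch would fail without this intermediate estimate.
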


Note that in the case (3) above, when no confusion arises, $D_{H}$ and $D_{\infty}$ are also used to denote the corresponding pulled-back divisors on $\tilde{X}_{m,n}$.

Combining Theorem 1.1 and Theorem~\ref{main2}, we also obtain

\begin{cor}
On $X_{m,n}$, assume that $\omega\in [D_H]+b[D_{\infty}]$ and $\chi\in [D_H]+b'[D_{\infty}]$. If $\omega$ satisfies the Calabi Ansatz, then
$\mathcal{C}_k(\omega)$ is the entire K\"{a}hler cone whenever $k>n$; and when $k\leq n$, $[\chi]\in \mathcal{C}_k(\omega)$ if and only if $c_k=\frac{{m+n+1 \choose k} \int_{X_{m,n}} \chi^{m+n+1-k}\wedge \omega^k}{\int_{X_{m,n}}\chi^{m+n+1}}>{n\choose k}$.
\end{cor}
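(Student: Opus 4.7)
The plan is to combine Theorem~\ref{main2} with Theorem 1.1, which identifies membership in $\mathcal{C}_k(\omega)$ with smooth convergence of the flow~(\ref{inversek}). Since the cone condition $[\chi]\in\mathcal{C}_k(\omega)$ depends only on the cohomology classes of $\chi$ and $\omega$, I am free to test it using the Calabi Ansatz representatives whose asymptotics are classified in Theorem~\ref{main2}.

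For the case $k\le n$, given $\omega\in [D_H]+b[D_\infty]$ with the specified Calabi Ansatz representative and $\chi\in [D_H]+b'[D_\infty]$, I would choose a Calabi Ansatz representative of $[\chi]$ (available on $X_{m,n}$ by the construction of Section 2) and run the flow from this initial data. If $c_k>{n\choose k}$, case (1) of Theorem~\ref{main2} yields smooth convergence, so Theorem 1.1 gives $[\chi]\in\mathcal{C}_k(\omega)$. Conversely, if $c_k\le{n\choose k}$, cases (2) and (3) of Theorem~\ref{main2} produce either a limit with conic singularities transverse to $P_0$ or a K\"ahler current on the blow-up $\tilde X_{m,n}$; neither represents smooth convergence to a K\"ahler metric on $X_{m,n}$, so Theorem 1.1 forces $[\chi]\notin\mathcal{C}_k(\omega)$.

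For $k>n$, the first bullet of Theorem~\ref{main2} gives smooth convergence for Calabi Ansatz initial data, so $[\chi]\in\mathcal{C}_k(\omega)$ for every class of the form $[D_H]+b'[D_\infty]$. To extend this to the entire K\"ahler cone of $X_{m,n}$, I would invoke the homogeneity in $[\chi]$ of the inequality defining $\mathcal{C}_k(\omega)$: under $\chi'\mapsto a\chi'$ with $a>0$, the topological constant $c_k$ rescales by $a^{-k}$, and both $(m+n+1)c_k\,\chi'^{m+n}$ and ${m+n+1\choose k}(m+n+1-k)\,\chi'^{m+n-k}\wedge \omega^k$ pick up the same factor $a^{m+n-k}$, so $\mathcal{C}_k(\omega)$ is invariant under positive scaling of $[\chi]$. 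Since every K\"ahler class on $X_{m,n}$ is a positive multiple of some class of the form $[D_H]+b'[D_\infty]$, the statement extends to the full K\"ahler cone. The only real bookkeeping is to confirm that the constant $c_k$ in the corollary matches the $c_k$ controlling the trichotomy of Theorem~\ref{main2}; no new analytic input enters beyond the two cited theorems.
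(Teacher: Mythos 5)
Your proposal is correct and follows essentially the same route as the paper, which derives this corollary precisely by combining Theorem 1.1 (smooth convergence of the flow is equivalent to $[\chi]\in\mathcal{C}_k(\omega)$) with the trichotomy of Theorem~\ref{main2}; the paper gives no further detail. Your added observations — that the cone condition depends only on cohomology classes so Calabi Ansatz representatives may be used, and that scale-invariance of the defining inequality reduces the full K\"ahler cone $\{a[D_H]+b[D_\infty]:a,b>0\}$ to the normalized classes $[D_H]+b'[D_\infty]$ — are exactly the implicit bookkeeping the paper leaves to the reader, and both check out.
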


 \begin{rem}
For both case (2) of Theorems~\ref{main1} and case (2) of Theorem~\ref{main2}, the critical equation (\ref{critical}) admits a bounded solution in the sense of pluri-potential theory.
This indicates a general result should hold for this type of equations. Thus, the results of ~\cite{K} and ~\cite{EGZ} on solutions for degenerate complex Monge-Amp\`{e}re equations may be extended to more general complex Monge-Amp\`{e}re type equations. We would like to discuss this aspect in future works.
\end{rem}

\begin{rem}
Similar to Remark~\ref{remark1}, it is easy to construct a proper inverse $\sigma_{k}$ type of flow on $\tilde{X}_{m,n}$ such that the limiting metric under the flow coincides with that of case (3) listed above. The inverse $\sigma_{k}$ flow can thus be viewed as an analytical method to connect birationally equivalent varieties.
\end{rem}

In this paper we only discuss some special K\"{a}hler manifolds with metrics satisfying strong symmetric conditions. However, the algebraic pictures revealed indicate that the geometric flows that we have studied can be used to transform between (possibly singular) algebraic varieties. In  subsequent works, we will discuss  general cases and their further applications to birational geometry.

The rest of paper is organized as follows. In Section 2, we study the $J$-flow on $\mathbb{P}^n \# \overline{\mathbb{P}^n}$ as a prototype. In Section 3, we use the same method to treat the general inverse $\sigma_k$-flow on $\mathbb{P}^n \# \overline{\mathbb{P}^n}$. In Section 4, we study the convergence behavior on $X_{m,n}$, a class of more generality.

{Acknowledgments: Both authors would like to thank Ben Weinkove, Jian Song and Lihe Wang for helpful discussion.}

\section{$J$-flow on $\mathbb{P}^n\# \overline{\mathbb{P}^n}$}

Let $X=\mathbb{P}^n \#\overline{\mathbb{P}^n}$ be $\mathbb{P}^n$ blowing up at one point. Denote $[E_0]$ and $[E_{\infty}]$ the exceptional divisor and  pull-back of the hyper-plane in $\mathbb{P}^{n}$
respectively. We have $H^{1,1}(X, \mathbb{R})= \text{span} \{ [E_0], [E_{\infty}]\}$ and any K\"{a}hler class $\Omega$ on $X$ is of the form:
\[
\Omega=b[E_{\infty}]-a[E_0], \quad  b>a>0.
\]

We recall Calabi Ansatz~\cite{Ca} on construction of rotational symmetric K\"{a}hler metrics on $X$. For notational convenience, let $d=\partial+\bar{\partial}$ and $d^{c}=\frac{i}{4\pi}(\partial-\bar{\partial})$, then $dd^{c}=\frac{i}{2\pi} \partial \bar{\partial}$. On $X \setminus (E_0\cup E_{\infty}) \cong \mathbb{C}^n \setminus 0$, we can associate a coordinate system $(z_{0},\cdots, z_{n})$. Define\[
\rho= \ln (|z_1|^2+|z_2|^2+ \cdots +|z_n|^2).\]

For a function $u\in C^{\infty}(\mathbb{R})$ such that

\[
  u'(\rho)>0, \quad  u''(\rho)>0,
\]  the (1,1)-form $\omega =d d^{c} u(\rho)$ is then a K\"{a}hler form.

In order to  extend $\omega$  to a smooth K\"{a}hler metric on $X$,  the following asymptotic properties of $u$ are required:
\begin{align} \label{asymptotic}
  (\dag). \quad &u_0(r):= u(\ln r)-a \ln r \\ \notag
       &\text{is extendable by continuity to a smooth function at} \, r=0, \, \text{and} \, u_0'(0)>0.\\ \notag
  (\ddag). \quad &u_{\infty}(r): =u(-\ln r) +b \ln r\\ \notag
        &\text{ is extendable by continuity to a smooth function at} \, r=0, \,  \text{and} \,  u_{\infty}'(0)>0.
\end{align}
It is easy to see, by the asymptotic behavior of $u$, that
\[
\lim_{\rho\to -\infty}u'(\rho)=a, \quad \lim_{\rho \to \infty} u'(\rho)=b.
\]
Moreover, since $u''(\rho)>0$,  $b>a$.  $a$ and $b$ characterize the K\"{a}hler class of $\omega$ in the following manner:
\[
 \omega \in b[E_{\infty}]-a[E_0].
\]

In this section, we treat the $J$-flow, which is a special case of general inverse $\sigma_{k}$ flows. It is defined as follows:

\begin{align} \label{jflow}
\left\{
\begin{array}{l l}
\frac{\partial}{\partial t} \varphi &=
c_1-\frac{n\chi_{\varphi}^{n-1}\wedge \omega}{\chi_{\varphi}^n},\\
\varphi(0)& =0.
\end{array}
\right.
\end{align}

After normalization we may assume
\[
\omega \in \alpha [E_{\infty}]-[E_0], \quad  \chi \in \beta[E_{\infty}]-[E_0];\quad   \alpha>1 , \quad \beta>1.
\]
If both $\omega$ and $\chi$ satisfy the Calabi Ansatz,  on the coordinate patch $X\setminus (E_0\cup E_{\infty})$, we have smooth functions $u,v\in C^{\infty}({\mathbb{R}})$ such that
\[
\omega=d d^{c} u(\rho),  \quad  \chi=d d^{c} v(\rho).
\]
This leads to
\[
\omega=(\frac{u'}{e^{\rho}}\delta_{ij}+ (u''-u')\frac{\bar{z_{i}}z_j}{e^{2\rho}} ) dz_i d\bar{z_{j}},\]
\[
\chi=(\frac{v'}{e^{\rho}}\delta_{ij}+ (v''-v')\frac{\bar{z_{i}}z_j}{e^{2\rho}} ) dz_i d\bar{z_{j}}.
\]
Thus the eigenvalues of $\chi$ with respect to $\omega$ are
\[
\underbrace{\frac{v'}{u'}, \frac{v'}{u'}, \cdots, \frac{v'}{u'}}_{(n-1)-times}\quad   \frac{v''}{u''}.
\]

It is easy to see that the $J$-flow preserves the Calabi Ansatz condition. Hence, we may assume that solution of the flow (\ref{jflow}) is $\chi_{\varphi(\cdot, t)}= dd^{c} v(\rho, t)$.
Consequently, (\ref{jflow}) is reduced to an evolution equation on $v(\rho, t)$:
\begin{align} \label{1.1}
\left\{
\begin{array}{l l}
\frac {\partial v}{\partial t} &=c_1- (n-1) \frac{u'}{v'}-\frac{u''}{v''},\\
v(\rho,0)&=v_{0}(\rho).
 \end{array}
\right.
\end{align}

The corresponding critical equation (\ref{critical}) is
\begin{align} \label{1.6}
c_1 =(n-1)\frac{u'}{v'}+\frac{u''}{v''}.
\end{align}

Taking one time derivative on (\ref{inversek}), and applying maximum principle, we obtain the bound
\begin{align}  \label{1.8}
0<C_1\leq \frac{\sigma_{n-1}(\chi_{\varphi})}{\sigma_n(\chi_{\varphi})}\leq C_2,
\end{align} for two universal constants $C_1$ and $C_2$ depending only on the initial data.

In terms of potential $u$ and $v$, (\ref{1.8}) is
\begin{align}
 \label{bound}
0<C_1 \leq (n-1)\frac{u'}{v'}+\frac{u''}{v''}\leq C_2.
\end{align}

To study flow (\ref{1.1}), we regard $(v'(\rho, t), u'(\rho))$ as a family of parametric plane curves. Since $u''>0$ and $v''>0$, each curve is the graph of a strict monotone increasing function $f(x,t)$, i.e.,
\begin{align} \label{cartesion}
f(v'(\rho,t),t)=u'(\rho).
\end{align}

We are concerned with the corresponding evolution equation for $f$.

\begin{prop}
The function $f=f(x,t)$ defined by (\ref{cartesion}) is a classical solution of the initial-boundary value problem
\begin{align} \label{evolutionf}
\frac{\partial f}{\partial t}= Q(f) [\frac{\partial^2 f}{\partial x^2}+(n-1)\frac{\frac{\partial f}{\partial x}}{x}-(n-1)\frac{f}{x^2}],
\end{align}
with
\[
f(1,t)=1,\quad  f(\beta, t)=\alpha \quad \forall t,
\]
where the initial value $f(x,0)=f_0(x)$ is any smooth monotone function satisfying the above boundary condition; $Q\in C^{\infty}([1,\alpha],\mathbb{R}_{\geq 0})$ is uniquely determined by  $u$ such that $Q(y)=0$ if and only if $y=1$ and $y=\alpha$.
\end{prop}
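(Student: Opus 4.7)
The plan is to translate the scalar equation (\ref{1.1}) for $v$ into an equation for $f$ by regarding $x = v'(\rho,t)$ as the new independent variable. Since $u'' > 0$ and $v'' > 0$, both $u'(\rho)$ and, for each fixed $t$, $v'(\cdot,t)$ are diffeomorphisms of $\mathbb{R}$ onto $(1,\alpha)$ and $(1,\beta)$ respectively, by the Calabi asymptotics. Hence $f(v'(\rho,t), t) := u'(\rho)$ defines a smooth strictly monotone function $f : [1,\beta]\times [0,\infty)\to [1,\alpha]$, and the boundary values $f(1,t) = 1$, $f(\beta,t) = \alpha$ fall out directly from the limits $u'\to 1, \alpha$ and $v'\to 1,\beta$ at $\rho = \mp \infty$.

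The core of the proof is then two chain-rule calculations applied to the defining identity $f(v'(\rho,t),t) = u'(\rho)$. Differentiating in $\rho$ gives $f_x\, v'' = u''$, so in particular $f_x = u''/v''$ and $v'' = u''/f_x$; differentiating in $t$ gives $f_t = -f_x\, v_{\rho t}$. Using $u' = f$ and $u''/v'' = f_x$, the right-hand side of (\ref{1.1}) becomes $c_1 - (n-1)f/x - f_x$ at the point with $x = v'(\rho,t)$, so
\[
v_t \;=\; c_1 - (n-1)\frac{f}{x} - f_x .
\]
I would then take $\partial_\rho$ of this relation, repeatedly using $\partial_\rho\bigl[g(v'(\rho,t),t)\bigr] = g_x\, v''$, to obtain
\[
v_{\rho t} \;=\; -\,v''\!\left[f_{xx} + (n-1)\frac{f_x}{x} - (n-1)\frac{f}{x^2}\right].
\]
Substituting into $f_t = -f_x\, v_{\rho t}$ and using $f_x v'' = u''$ produces the desired evolution equation with coefficient $u''(\rho)$ in front of the second-order bracket.

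Finally, since $u$ is $t$-independent and $\rho \mapsto u'(\rho)$ is a diffeomorphism, I can invert to write $u''(\rho) = Q(u'(\rho)) = Q(f(x,t))$ for a smooth function $Q$ depending only on $u$. The sign and vanishing properties of $Q$ come cleanly from the Calabi ansatz: $u'' > 0$ on $\mathbb{R}$ gives $Q > 0$ on $(1,\alpha)$, while the asymptotic expansions (\ref{asymptotic}) force $u'' \to 0$ as $\rho \to \pm\infty$, hence $Q(1) = Q(\alpha) = 0$ and no other zeros. The initial datum $f_0$ is inherited from the initial potential $v_0$ via $f_0(v_0'(\rho)) = u'(\rho)$, and is smooth and monotone by the same reasoning.

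The only nontrivial step I anticipate is the boundary regularity of $Q$ at $y = 1$ and $y = \alpha$, which must be read off from the smooth extensions $u_0(r)$ and $u_\infty(r)$ in (\ref{asymptotic}); once this is unpacked, the rest of the proposition is a bookkeeping exercise in the chain rule.
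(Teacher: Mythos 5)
Your proposal is correct and takes essentially the same route as the paper: the same chain-rule identities $f_x v''=u''$ and $f_t=-f_x\,\partial_t v'$, the same substitution into the $\rho$-derivative of the flow equation (\ref{1.1}), and the same definition $Q=u''\circ (u')^{-1}$ with its positivity on $(1,\alpha)$ and vanishing at the endpoints read off from the Calabi asymptotics. The only differences are cosmetic --- you rewrite the right-hand side in terms of $f$ before differentiating in $\rho$, whereas the paper differentiates first and then substitutes --- and your remark that the $C^\infty$ regularity of $Q$ up to $y=1,\alpha$ must be extracted from the smooth extensions in (\ref{asymptotic}) is a point the paper leaves implicit.
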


\begin{proof}
Taking one time derivative of (\ref{cartesion}), we get
\begin{align} \label{1.9}
\frac{\partial f}{\partial x} \frac{\partial v'}{\partial t} +\frac{\partial f}{\partial t}=0.
\end{align}
Taking one spacial derivative of (\ref{1.1}), we get
\begin{align} \label{1.2}
\frac{\partial v'}{\partial t} =-(n-1)(\frac{u'}{v'})'-(\frac{u''}{v''})'.
\end{align}
Taking spacial derivatives of (\ref{cartesion}) we also have
\begin{align} \label{1.3}
\frac{\partial f}{\partial x}=\frac{u''}{v''},\quad \frac{\partial^2 f}{\partial x^2}=\frac{(\frac{u''}{v''})'}{v''}.
\end{align}
Plugging (\ref{1.2}) and (\ref{1.3}) in (\ref{1.9}), we get
\begin{align}  \label{1.4}
\frac{\partial f}{\partial t}&=-\frac{\partial f}{\partial x}\frac{\partial v'}{\partial t}\\ \notag
                             &=\frac{\partial f}{\partial x}[(n-1)(\frac{u''}{v'}-\frac{u'v''}{v'^2})+(\frac{u''}{v''})']\\ \notag
                             &=\frac{\partial f}{\partial x}[(n-1)(\frac{u''}{v'}-\frac{u'v''}{v'^2})+\frac{\partial^2 f}{\partial x^2}v'']\\ \notag
                             &=u''[\frac{\partial^2 f}{\partial x^2}+(n-1)\frac{\frac{\partial f}{\partial x}}{x}-(n-1)\frac{f}{x^2}].
\end{align}
For a given $u$, since $u''>0$, then $u'$ is monotone, it follows that we can write $u''$ as
\[
u''=u''(u'^{-1}(f)):= Q(f).
\]
By the asymptotic behavior of $u'$ (\ref{asymptotic}), we have $Q(1)=Q(\alpha)=0$ and $Q(f)>0$ whenever $1<f<\alpha$.

The boundary and initial conditions follow directly from the limit behavior of $u'$ and $v'$.
\end{proof}

To study the convergence behavior of (\ref{1.1}), it suffices to study the convergence behavior of (\ref{evolutionf}). This is a degenerate parabolic equation of one spacial dimension. A priori, we know the long time existence and we also have a uniform $C^1$ bound on $f$ by (\ref{bound}), i.e.,
\[
\frac{\partial f}{\partial x} =\frac{u''}{v''}\leq C_2.
\]
Therefore we have a uniform limit
\begin{align} \label{limit}
\lim_{t\to \infty} f(x,t) =f_{\infty}(x).
\end{align}
It follows that $f_{\infty}(x)$ is a  weak solution of the corresponding stationary problem:
\begin{align} \label{1.11}
Q(f) [\frac{\partial^2 f}{\partial x^2}+(n-1)\frac{\frac{\partial f}{\partial x}}{x}-(n-1)\frac{f}{x^2}]=0, \quad f(1)=1, \, f(\beta)=\alpha.
\end{align}

Equivalently, we can write (\ref{1.11}) as
\begin{align}  \label{1.12}
\Psi_{\{1<f<\alpha\}}(\frac{\partial^2 f}{\partial x^2}+(n-1)\frac{\frac{\partial f}{\partial x}}{x}-(n-1)\frac{f}{x^2})=0
\end{align}
subject to the boundary condition $f(1)=1$ and $f(\beta)=\alpha$, where $\Psi_{.}$ is the characteristic function of a set. Standard elliptic theory shows that $f_{\infty}$ is a strong solution of (\ref{1.11}) and (\ref{1.12}).

It is straightforward to calculate  all possible solutions of (\ref{1.12}). Piece-wisely, they are either  constant functions $f=1$ and $f=\alpha$, or solutions of the differential equation
\begin{align} \label{ode1}
f''+(n-1)\frac{f'}{x}-(n-1)\frac{f}{x^2}=0,
\end{align}
which can be written as $Ax+{B\over x^{n-1}}$, for appropriate constants $A$ and $B$.

Since $f(x,t)$ is monotone for all $t$, limit $f_{\infty}$ can only be of the form

\begin{align} \label{1.13}
f_{\infty}(x)=\begin{cases}
1, & 1\leq x\leq s, \\
g(x), & s \leq x\leq t,\\
\alpha, & t\leq x \leq \beta,
\end{cases}
\end{align}
where $g$ is the solution of (\ref{ode1}) on $[s,t]$.

Due to the degeneracy condition on $Q$, $f_{\infty}$ may lose regularity at $x=s$ and $x=t$. Nevertheless, on any compact subset in which $\{1<f_{\infty}(x)<\alpha\}$, (\ref{evolutionf}) is uniform elliptic, and we obtain higher order estimates (cf. ~\cite{W}) from general theory of nonlinear parabolic equations, consequently the convergence (\ref{limit}) is smooth in region $\{x| 1<f_{\infty}(x)<\alpha\}$.

Consider the ODE
\begin{align} \label{ode}
f''(x)+(n-1)\frac{f'(x)}{x}-(n-1)\frac{f(x)}{x^2}=0
\end{align}
with boundary value $ f(1)=1$ and $ f(\beta)=\alpha$,
it has a unique solution:
 \begin{align} \label{finfty}
\tilde{f}(x)=ax+\frac{b}{x^{n-1}},
\end{align}with $a=\frac{\alpha\beta^{n-1}-1}{\beta^{n}-1}$ and $b=\frac{\beta^n-\alpha\beta^{n-1}}{\beta^n-1}$

We now have the following

\begin{lem}\label{lemma1.5}
Let $\varphi(x,t), \psi(x,t)\in\mathcal{P}_{\chi}$ be two solutions of the flow (\ref{inversek}) with initial values $\varphi_0$ and $\psi_0$ respectively.
Let $f_{t},{\bar f}_{t}$ be their corresponding functions satisfying (\ref{evolutionf}). Then there exists a universal constant $C$ such that
\begin{align}\label{new1}
|\varphi(x,t)-\psi(x,t)|_{C^0}\leq C, \quad \forall t.
\end{align}
In particular, for ${f}_{\infty}(x)=\lim_{t\to\infty}f_{t}(x)$ and $\tilde{f}_{\infty}(x)=\lim_{t\to\infty}{\bar f}_{t}(x),$ we have
$${f}_{\infty}(x)=\tilde{f}_{\infty}(x).$$
\end{lem}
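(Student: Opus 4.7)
The plan is to prove the lemma in two stages: first establishing the $C^{0}$ bound (\ref{new1}) via a maximum principle applied to $w=\varphi-\psi$, and then identifying $f_{\infty}=\tilde{f}_{\infty}$ by combining this bound with the strong maximum principle applied to the linearization of (\ref{critical}), which both limit metrics satisfy on the smooth region.

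For the $C^{0}$ bound, set $w=\varphi-\psi$. At any point $x_{0}(t)\in X$ where $w(\cdot,t)$ attains its spatial maximum, the Hessian inequality $i\partial\bar{\partial}w(x_{0})\leq 0$ translates to $\chi_{\varphi}(x_{0})\leq\chi_{\psi}(x_{0})$ as Hermitian forms. Since $\chi\mapsto\sigma_{n-k}(\chi)/\sigma_{n}(\chi)=\sigma_{k}(\chi^{-1})$ is monotone decreasing in the Hermitian order, one gets $\sigma_{n-k}(\chi_{\varphi})/\sigma_{n}(\chi_{\varphi})\geq\sigma_{n-k}(\chi_{\psi})/\sigma_{n}(\chi_{\psi})$ at $x_{0}$. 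Combined with $F'<0$ from (\ref{concavity}), this yields $\partial_{t}w(x_{0},t)\leq 0$. By Hamilton's envelope argument, $t\mapsto\max_{X}w(\cdot,t)$ is non-increasing; the symmetric argument for $\min_{X}w$ gives a matching lower bound. Hence $|\varphi-\psi|_{C^{0}}\leq|\varphi_{0}-\psi_{0}|_{C^{0}}$ uniformly in $t$, which is (\ref{new1}).

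To deduce $f_{\infty}=\tilde{f}_{\infty}$, pass to $t\to\infty$. On the open set where $\varphi_{t}\to\varphi_{\infty}$ and $\psi_{t}\to\psi_{\infty}$ converge smoothly (all of $X$, or $X\setminus E_{0}$ depending on the case of Theorem~\ref{main1}), both limits satisfy $\sigma_{k}(\chi^{-1})=c_{k}$ pointwise, and $w_{\infty}=\varphi_{\infty}-\psi_{\infty}$ is bounded. At an interior maximum $x^{*}$ of $w_{\infty}$, $\chi_{\varphi_{\infty}}(x^{*})\leq\chi_{\psi_{\infty}}(x^{*})$ as before; since $\sigma_{k}(\chi_{\varphi_{\infty}}^{-1})(x^{*})=c_{k}=\sigma_{k}(\chi_{\psi_{\infty}}^{-1})(x^{*})$, strict monotonicity of $\sigma_{k}(A^{-1})$ in positive Hermitian $A$ forces $\chi_{\varphi_{\infty}}(x^{*})=\chi_{\psi_{\infty}}(x^{*})$. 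Averaging the linearization of the critical equation along the segment $\chi_{s}=\chi_{\psi_{\infty}}+s(\chi_{\varphi_{\infty}}-\chi_{\psi_{\infty}})$ produces a linear second-order elliptic operator annihilating $w_{\infty}$; the strong maximum principle then forces $w_{\infty}\equiv\mathrm{const}$ on the smooth region, which means $\chi_{\varphi_{\infty}}=\chi_{\psi_{\infty}}$ there. Under the Calabi ansatz this is exactly $f_{\infty}=\tilde{f}_{\infty}$ on the interior of its support, and the equality extends to the degenerate boundary values $f=1$ and $f=\alpha$ by continuity.

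The main obstacle is the global applicability of the strong maximum principle in cases (2) and (3) of Theorem~\ref{main1}, where both limits are singular along $E_{0}$ and the supremum of $w_{\infty}$ could a priori be attained on $E_{0}$. The resolution should come from the fact that both limit metrics exhibit the same conic profile (angle $\pi$ transverse to $E_{0}$), so $w_{\infty}$ extends continuously across $E_{0}$; together with $|w_{\infty}|_{C^{0}}\leq C$ and the elliptic equation on $X\setminus E_{0}$, a standard cutoff argument then upgrades the local conclusion to $w_{\infty}\equiv\mathrm{const}$ on all of $X$.
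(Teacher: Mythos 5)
Your first half is, modulo presentation, the paper's own argument: the paper writes $\varphi-\psi$ as a solution of the linearized parabolic equation $\partial_t(\varphi-\psi)=F^{i\bar j}\bigl((1-s_t)\chi_{\varphi}+s_t\chi_{\psi}\bigr)(\varphi-\psi)_{i\bar j}$ via the mean value theorem and applies the maximum principle; your pointwise monotonicity argument at the spatial extremum together with Hamilton's trick is the same estimate, and it is correct.

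The second half, however, contains a genuine gap. You assume that both limits satisfy $\sigma_k(\chi^{-1})=c_k$ pointwise on the smooth region. This fails precisely in the case where the lemma carries real content, namely Case 4 of Theorem~\ref{111} (case (3) of Theorem~\ref{main1}): there the limit has the form (\ref{1.13}), equal to $1$ on $[1,s]$ and to a solution $g(x)=Ax+Bx^{1-n}$ of (\ref{ode1}) on $[s,\beta]$, and on $\{f_\infty>1\}$ the metric satisfies $(n-1)\frac{u'}{v'}+\frac{u''}{v''}=nA$, where $nA$ is the topological constant of the \emph{jumped} class $\beta[E_\infty]-s[E_0]$, not $c_1$. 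A priori the two limits could have different transition points $s\neq\tilde s$ and hence solve the critical equation with two \emph{different} constants, so there is no single equation to which your comparison/strong maximum principle applies. Worse, your patch near $E_0$ --- ``both limit metrics exhibit the same conic profile, so $w_\infty$ extends continuously across $E_0$'' --- assumes exactly what must be proved: if $s\neq\tilde s$ then $v'_\infty(-\infty)=s\neq\tilde s=\tilde v'_\infty(-\infty)$ and $w_\infty\sim(s-\tilde s)\rho=(s-\tilde s)\log|z|^2$ near $E_0$, which is unbounded; equality of cone angles does not give equality of the leading ($\delta$-concentration) coefficients. The paper closes the argument in one line by exploiting the one-dimensional reduction: by (\ref{1.13}) each limit is determined by the pair $\bigl(v'_\infty(-\infty),v'_\infty(+\infty)\bigr)=(s,t)$ together with the unique ODE solution joining the boundary values, and the bound (\ref{new1}) forces $|v_\infty-\tilde v_\infty|$ to stay bounded, which is impossible unless $s=\tilde s$ and $t=\tilde t$. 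You already have every ingredient for this; replace the elliptic uniqueness argument by the observation that unequal asymptotic values of $v'_\infty$ contradict the $C^0$ bound from your first paragraph. (Your route does work, and is essentially the standard uniqueness proof for (\ref{critical}), in the non-degenerate cases where the convergence is smooth on all of $X$ --- but there the lemma is not really needed.)
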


\begin{proof}  Taking difference of flows with two initial values, we get $\varphi(x,t)-\psi(x,t)$ satisfies a parabolic equation
\[
\frac{\partial [\varphi(x,t)-\psi(x,t)]}{\partial t}=F^{i\bar{j}}((1-s_t)\chi_{\varphi(t)}+s_{t}\chi_{\psi(t)}) (\varphi(t)-\psi(t))_{i\bar{j}},
\]
where $0<s_{t}<1$.
Inequality (\ref{new1}) then follows from the maximum principle.

For the second part of the lemma,  we consider the corresponding limits for $v'(\rho,t)$, denoted by $v'_{\infty}$ and $\tilde{v}'_{\infty}$ respectively.  By (\ref{1.13}), there exist constants $s,t,\tilde{s},\tilde{t}$ such that
\[
v'_{\infty}(-\infty)=s, \quad v'_{\infty}(\infty)=t,
\]
and
\[
\tilde{v}'_{\infty}(-\infty)={\tilde s}, \quad \tilde{v}'_{\infty}(\infty)={\tilde t}.
\]
Thus, $|v_{\infty}-\tilde{v}_{\infty}|$ is not uniformly bounded unless $s=\tilde s$ and $t=\tilde t$. We have thus proved the lemma.
\end{proof}

We may now state the following key result of this section:

\begin{thm}\label{111} The flow (\ref{evolutionf}) converges to a unique limit $f_{\infty}(x)$. For the expression of $f_{\infty}$, we have following four cases:
\begin{enumerate}
  \item $\alpha> \beta$: $f_{\infty}(x)=\tilde{f}(x)$;
  \item $\alpha<\beta$ and $\frac{\alpha\beta^{n-1}-1}{\beta^n-1}>\frac{n-1}{n}$: $f_{\infty}(x)=\tilde{f}(x)$;
  \item $\alpha<\beta$ and $\frac{\alpha\beta^{n-1}-1}{\beta^n-1}=\frac{n-1}{n}$: $f_{\infty}(x)=\tilde{f}(x)$;
  \item $\alpha<\beta$ and $\frac{\alpha\beta^{n-1}-1}{\beta^n-1}<\frac{n-1}{n}$:
  Let
  \begin{align} \notag
  \lambda=\inf \{ \lambda'\, | \, \exists g(x) \, \text{satisfies} \, (\ref{ode}), \,
  g(\lambda')=1, \, g(\beta)=\alpha\, \text{and}  \, \, g\geq 1\}.
  \end{align}
Let $g$ be the corresponding solution of (\ref{ode}) with $g(\lambda)=1$ and $g(\beta)=\alpha$, we have in this case
  \[
f_{\infty}(x) =
\begin{cases}
1, & 1\leq x\leq \lambda; \\
g(x), & \lambda \leq x\leq \beta.
\end{cases}
\]

\end{enumerate}
\end{thm}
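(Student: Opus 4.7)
The existence of $f_\infty=\lim_{t\to\infty}f_t$, its piecewise form (\ref{1.13}), and its uniqueness (via Lemma \ref{lemma1.5}) have already been established, so the proof reduces to identifying the correct form in each case. A preliminary observation is that $t=\beta$ in every case: the Calabi-ansatz smoothness of $u$ and $v_t$ near $E_\infty$ gives $u'(\rho)=\alpha-e^{-\rho}u_\infty'(e^{-\rho})$ and $v_t'(\rho)=\beta-e^{-\rho}v_{t,\infty}'(e^{-\rho})$ with $u_\infty'(0),v_{t,\infty}'(0)>0$, so $f_t$ admits a smooth expansion $f_t(x)=\alpha+A_t(x-\beta)+\cdots$ with $A_t>0$ near $x=\beta$. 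This asymptote is dynamically preserved and passes to the limit, ruling out a top flat region.

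For Cases (1), (2), (3), a direct calculation with $\tilde f(x)=ax+b/x^{n-1}$ from (\ref{finfty}) gives $\tilde f'(x)=a-(n-1)b/x^n$ and $\tilde f''(x)=n(n-1)b/x^{n+1}$. In Case (1), $\alpha>\beta$ forces $a>1,b<0$, so $\tilde f'>0$ on $[1,\beta]$. In Cases (2), (3), $b\geq 0$ makes $\tilde f'$ non-decreasing, while $\tilde f'(1)=na-(n-1)\geq 0$ by hypothesis. Hence $\tilde f$ is monotone with $\tilde f(1)=1$ and $\tilde f(\beta)=\alpha$, giving an admissible $(s,t)=(1,\beta)$ candidate. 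A parabolic maximum principle promotes this to convergence: at an interior spatial max $x_0$ of $f_t-\tilde f$ with $M(t):=\max_x(f_t-\tilde f)>0$, the derivative tests together with $L(\tilde f)=0$ give $L(f)(x_0)\leq -(n-1)M(t)/x_0^2$, and $Q(f(x_0))>0$ yields $M'(t)<0$, forcing $M(t)\to 0$; the symmetric minimum argument gives $f_t\to\tilde f$ uniformly.

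For Case (4), the hypothesis $\tilde f'(1)=na-(n-1)<0$ makes $\tilde f$ dip below $1$, so $s>1$. Parameterising ODE solutions by $g_{\lambda'}(\lambda')=1$ and $g_{\lambda'}(\beta)=\alpha$, the slope $g_{\lambda'}'(\lambda')$ varies continuously with $\lambda'$, is negative at $\lambda'=1$, and is positive near $\lambda'=\beta$; hence there is a unique $\lambda\in(1,\beta)$ with $g_\lambda'(\lambda)=0$, and the admissibility constraint $g_{\lambda'}\geq 1$ forces $\lambda'\geq\lambda$, matching the infimum in the statement. To confirm $f_\infty=f_\infty^\lambda$, I apply the same maximum principle with $f_\infty^\lambda$ as comparison, using $L(f_\infty^\lambda)\leq 0$ everywhere ($=0$ on $[\lambda,\beta]$ and $=-(n-1)/x^2$ on $[1,\lambda]$). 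The $C^1$-matching $g_\lambda'(\lambda)=0$ ensures $f_\infty^\lambda$ is differentiable across $x=\lambda$, validating the derivative tests throughout $(1,\beta)$; combined with the automatic bound $f_t\geq 1=f_\infty^\lambda$ on $[1,\lambda]$ (which eliminates negative min contributions there), the argument of Cases (1)--(3) goes through.

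The main obstacle is this last selection of exactly $\lambda=\inf$: each candidate $f_\infty^{\lambda'}$ with $\lambda'>\lambda$ is itself a weak stationary solution of (\ref{evolutionf}), but is only $C^0$ at $x=\lambda'$ (left derivative $0$, right derivative $g_{\lambda'}'(\lambda')>0$), so the maximum-principle comparison fails at the corner and cannot identify it as a limit; only the $C^1$ candidate at $\lambda$ survives, which corresponds geometrically to the conic angle $\pi$ of the limiting metric and to the minimal partial blow-up.
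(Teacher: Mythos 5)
Your overall architecture (reduce to identifying which profile of the form (\ref{1.13}) is the limit, then compare with the explicit candidate) is reasonable, but the step that actually produces convergence is not justified. In Cases (1)--(3) you deduce, at an interior maximum $x_0$ of $f_t-\tilde f$ with $M(t)>0$, that $L(f)(x_0)\leq -(n-1)M(t)/x_0^2$ and hence $M'(t)<0$, and then conclude $M(t)\to 0$. Strict decrease alone does not force decay to zero; you need a differential inequality $M'(t)\leq -cM(t)$ with $c>0$ uniform, i.e.\ a uniform positive lower bound on $Q(f(x_0(t),t))$ along the extremum. But $Q$ vanishes precisely at the values $1$ and $\alpha$, and nothing in your argument keeps $f(x_0(t),t)$ away from them: at a negative minimum of $f_t-\tilde f$ the value $f_t(x_0)$ can drift toward $1$ even while $|\min(f_t-\tilde f)|$ stays bounded below, because there is no a priori positive lower bound on $\partial f/\partial x$ --- indeed the absence of such a bound is exactly the phenomenon of Cases (3) and (4). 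The same degeneracy undermines the Case (4) comparison with $f_\infty^{\lambda}$, and with it your exclusion of the corner candidates $f_\infty^{\lambda'}$, $\lambda'>\lambda$: ``the maximum-principle comparison fails at the corner'' is an inability to verify, not a disproof. Your preliminary claim that the slope $A_t>0$ at $x=\beta$ ``passes to the limit'' has the same flaw ($A_t>0$ for each $t$ does not give $\lim_t A_t>0$), though this point turns out not to be load-bearing if the comparison argument were repaired.

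The paper avoids any quantitative decay estimate. Using Lemma~\ref{lemma1.5} it replaces the initial datum by the chord $f_0(x)=\frac{\alpha-1}{\beta-1}(x-1)+1$; the strong maximum principle then gives a \emph{one-sided} ordering between $f_t$ and $\tilde f$ (resp.\ $\tilde g$) that is preserved for all time (Claims~\ref{claima}, \ref{2.61}), and, separately, monotonicity of $f_t$ in $t$ (Claims~\ref{claimb}, \ref{2.71}). A monotone bounded family converges pointwise with no rate needed, and the limit is then pinned down by squeezing it, within the classified family (\ref{1.13}), between $f_0$ (which is $>1$ on $(1,\beta]$, killing a bottom flat piece in Cases (1)--(3)) and $\tilde f$ (which is $<\alpha$ on $[1,\beta)$, killing a top flat piece); in Case (4) the infimum characterization of $\lambda$ falls out of the supremum characterization of the monotone limit. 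To salvage your route you would need either to prove the monotonicity in $t$ (which is where the special initial datum is used) or to establish a genuine lower bound for $Q$ at the moving extremum, e.g.\ via a strong maximum principle or Harnack-type argument; as written, neither is present.
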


To better understand the theorem, we illustrate initial and limit functions of four cases by Figure~\ref{figure2}.
\begin{figure}[h!]
\begin{center}
\psfrag{2}{$\alpha$}
\psfrag{4}{$\beta$}
\psfrag{5}{$f_{_0}$}
\psfrag{6}{$\tilde{f}$}
\psfrag{case 1}{Case 1: $\alpha>\beta$}
\psfrag{8}{$\alpha$}
\psfrag{10}{$\beta$}
\psfrag{12}{$f_{_0}$}
\psfrag{11}{$\tilde{f}$}
\psfrag{case 3}{Case 2: $\alpha<\beta$, $\tilde{f}'(1)>0$}
\psfrag{19}{$\alpha$}
\psfrag{17}{$\beta$}
\psfrag{20}{$f_{_0}$}
\psfrag{21}{$\tilde{f}$}
\psfrag{case 2}{Case 3: $\alpha>\beta$, $\tilde{f}'(1)=0$}
\psfrag{case 4}{Case 4: $\alpha>\beta$, $\tilde{f}'(1)<0$}
\psfrag{23}{$\alpha$}
\psfrag{25}{$\beta$}
\psfrag{28}{$\tilde{f}$}
\psfrag{26}{$f_{_0}$}
\psfrag{27}{$f_{\infty}$}
\psfrag{1}{1}
\psfrag{3}{1}
\psfrag{7}{1}
\psfrag{9}{1}
\psfrag{13}{1}
\psfrag{14}{1}
\psfrag{15}{1}
\psfrag{16}{1}
\psfrag{29}{$\lambda$}
\includegraphics[height=80mm]{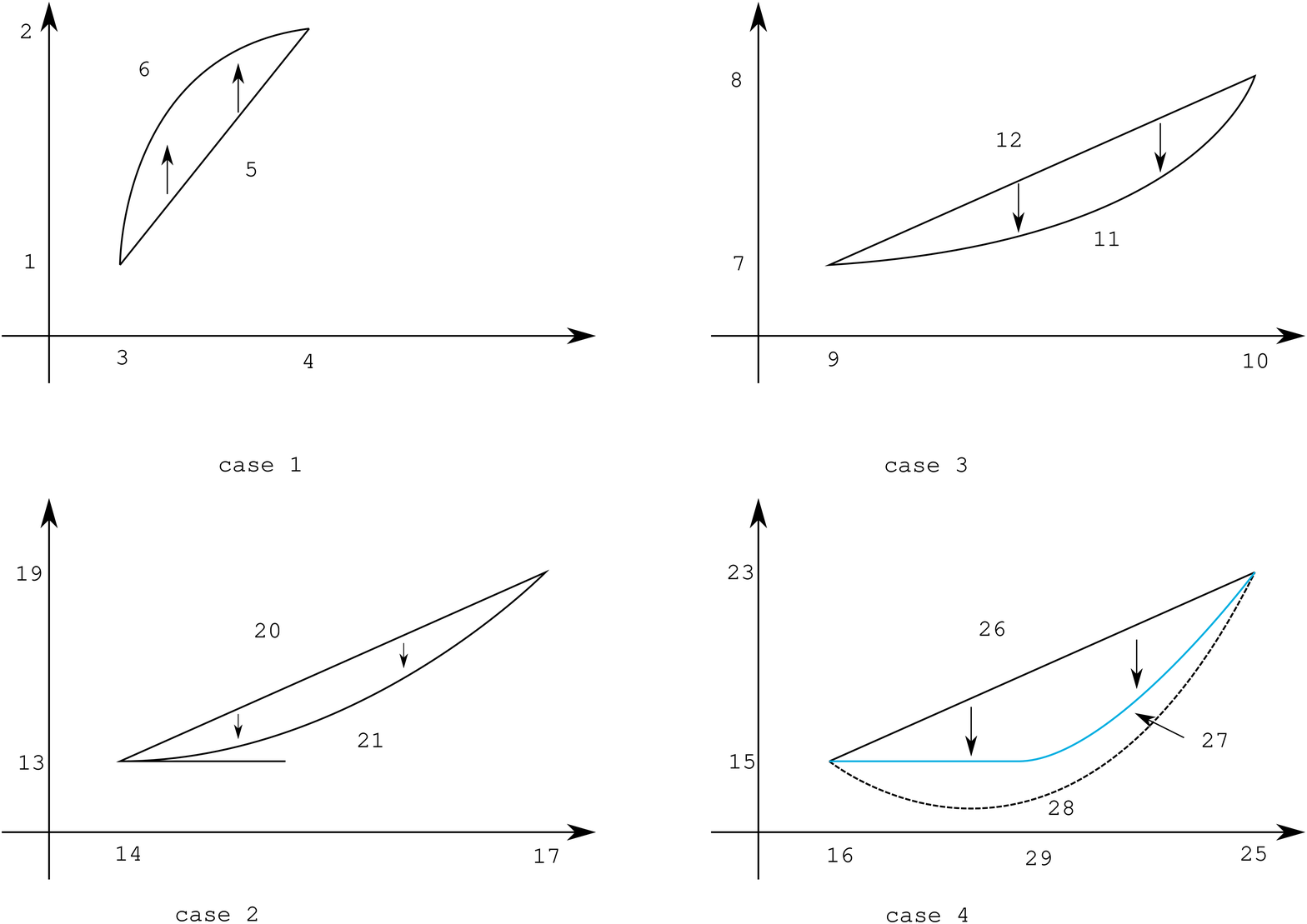}
\caption{}\label{figure2}
\end{center}
\end{figure}

\begin{proof}\ \\
Note that for the first three cases, $1<f_{\infty}(x)<\alpha$ if $1<x<\beta$. In Case 1, $f_{\infty}(x)$ is a concave function. In Cases 2 and 3, $f_{\infty}(x)$ is convex. We distinguish Case 2 and Case 3 by the fact that $f_{\infty}'(1)>0$ in Case 2 and $f_{\infty}'(1)=0$ in Case 3.

\emph{\textbf{Case 1: $\alpha > \beta$}}\ \\
By Lemma~\ref{lemma1.5}, we may choose a special initial value
\[
f_{0}(x)=\frac{\alpha-1}{\beta-1}(x-1)+1, \ x\in[1,\beta].
\]

\begin{claim}\label{claima} With conditions given as above,
$$f(x,t)-\tilde{f}(x)<0, \ \ \ \forall t\in{\mathbb{R}}, x\in (1, \beta).$$
\end{claim}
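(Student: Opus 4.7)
The natural approach is to compare $f(\cdot,t)$ directly with the stationary solution $\tilde f(x)=ax+b/x^{n-1}$ via the parabolic maximum principle. The first step is the initial strict inequality. In Case 1, $\alpha>\beta$ gives
\[
b=\frac{\beta^n-\alpha\beta^{n-1}}{\beta^n-1}=\frac{\beta^{n-1}(\beta-\alpha)}{\beta^n-1}<0,
\]
so $\tilde f''(x)=n(n-1)b/x^{n+1}<0$ on $(1,\beta)$; that is, $\tilde f$ is strictly concave. Since the chosen initial datum $f_0(x)=\frac{\alpha-1}{\beta-1}(x-1)+1$ is the linear interpolant between the common endpoints $(1,1)$ and $(\beta,\alpha)$, strict concavity forces $f_0(x)<\tilde f(x)$ for every $x\in(1,\beta)$. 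Moreover $a=(\alpha\beta^{n-1}-1)/(\beta^n-1)>0$ combined with $b<0$ gives $\tilde f'(x)>0$, so $\tilde f$ is monotone increasing and takes values strictly in $(1,\alpha)$ on $(1,\beta)$.

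The second step is to derive a linear parabolic equation for the difference $w(x,t):=f(x,t)-\tilde f(x)$. Writing $L[\phi]:=\phi_{xx}+(n-1)\phi_x/x-(n-1)\phi/x^2$, the ODE (\ref{ode}) is exactly $L[\tilde f]=0$, so $\tilde f$ is a time-independent solution of (\ref{evolutionf}). Subtraction yields
\[
w_t=Q(f)\,L[w]=Q(f)\Bigl(w_{xx}+\frac{n-1}{x}w_x-\frac{n-1}{x^2}w\Bigr),
\]
which is linear in $w$ with non-negative leading coefficient $Q(f)\geq 0$ and non-positive zeroth-order coefficient $-(n-1)Q(f)/x^2\leq 0$. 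The boundary values match exactly: $w(1,t)=w(\beta,t)=0$ for all $t\geq 0$, while $w(\cdot,0)<0$ on $(1,\beta)$ by the first step.

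The third step applies the maximum principle. Because the zeroth-order coefficient is non-positive, the weak parabolic maximum principle yields $w\leq 0$ on $[1,\beta]\times[0,\infty)$. To upgrade to the strict inequality $w<0$ on $(1,\beta)\times[0,\infty)$, suppose $w(x_0,t_0)=0$ at some interior point $x_0\in(1,\beta)$, $t_0>0$; then $f(x_0,t_0)=\tilde f(x_0)\in(1,\alpha)$, so $Q(f(x_0,t_0))>0$ and the equation is uniformly parabolic in a neighborhood of $(x_0,t_0)$. The strong parabolic maximum principle therefore forces $w\equiv 0$ on the parabolic component through $(x_0,t_0)$, contradicting $w(\cdot,0)<0$ on $(1,\beta)$.

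The main obstacle in this scheme is precisely controlling the degeneracy of $Q$ at $f=1$ and $f=\alpha$. One must first use the weak maximum principle on $f$ itself, together with the constants $1$ and $\alpha$ as sub- and super-solutions, to confine $f(x,t)\in[1,\alpha]$, so that $Q(f)$ is well-defined and non-negative throughout. The delicate point is that the strong maximum principle cannot be applied across $\{Q(f)=0\}$; however, at any prospective interior zero of $w$ one has $f=\tilde f\in(1,\alpha)$ and thus lies strictly inside the non-degenerate region, so the obstruction does not actually arise. If one prefers to avoid this qualitative argument, the same conclusion follows from a vanishing-viscosity regularization $Q_\varepsilon:=Q+\varepsilon$, for which classical strong maximum principle gives $w^\varepsilon<0$, followed by passage $\varepsilon\downarrow 0$.
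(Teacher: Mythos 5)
Your proof is correct and follows essentially the same route as the paper: form $w=f-\tilde f$, note that it satisfies the linearized degenerate parabolic equation with zero boundary data and strictly negative initial data, and conclude by the strong maximum principle. You supply two details the paper leaves implicit --- the verification that $f_0<\tilde f$ via strict concavity of $\tilde f$ when $\alpha>\beta$, and the localization near a prospective interior zero where $Q(f)>0$ so the strong maximum principle applies despite the degeneracy --- though note that your vanishing-viscosity alternative only yields $w\le 0$ after letting $\varepsilon\downarrow 0$, so it is the qualitative argument that actually delivers the strict inequality.
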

\emph{Proof of the claim:}
This is a simple application of the strong maximum principle. Since $f_{\infty}$ is the solution of (\ref{ode}) on $[1,\beta]$, thus $h(x,t):=f(x,t)-f_{\infty}(x)$ satisfies
\[
\frac{\partial h}{\partial t}= Q(f) [\frac{\partial^2 h}{\partial x}+(n-1)\frac{\frac{\partial h}{\partial x}}{x}-(n-1)\frac{h}{x^2}].
\]
Since $h_{_0}(x)=f_{_0}(x)-f_{\infty}(x) < 0$ and $h(1, t)=f(1,t)-f_{\infty}(\beta)=0$, $h(\beta, t)=f(\beta, t)-f_{\infty}(\beta)=0$, it follows from strong maximum principle that $h(x,t)<0$.

\begin{claim}\label{claimb}
$\frac{\partial f}{\partial t}\geq 0.$
\end{claim}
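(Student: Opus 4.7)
\textbf{Plan for Claim~\ref{claimb}.} My strategy is to view $w := \partial f/\partial t$ as a solution of the linearization of (\ref{evolutionf}) and invoke a parabolic minimum principle. The initial value $w(\cdot,0)$ is computed explicitly from the special linear choice of $f_0$ available by Lemma~\ref{lemma1.5}, and the boundary values $w(1,t)$, $w(\beta,t)$ automatically vanish because $f$ is pinned at $1$ and $\alpha$ there for all $t$.

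For the linear initial datum $f_0(x)=\tfrac{\alpha-1}{\beta-1}(x-1)+1$ one has $f_0''\equiv 0$, and a direct computation gives
\[
L[f_0](x) := f_0''+\frac{n-1}{x}f_0'-\frac{n-1}{x^2}f_0 = \frac{n-1}{x^2}\cdot\frac{\alpha-\beta}{\beta-1}.
\]
Since we are in Case~1 where $\alpha>\beta$, $L[f_0]>0$ on $(1,\beta)$; combined with $Q(f_0)\geq 0$ this yields $w(x,0)=Q(f_0(x))L[f_0](x)\geq 0$.

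Differentiating (\ref{evolutionf}) in $t$ and using that $L$ is linear in $x$, $w$ satisfies
\[
\partial_t w = Q(f)\,L[w] + Q'(f)\,L[f]\,w,
\]
a linear parabolic equation with smooth coefficients. By the strict spatial monotonicity of $f$ and Claim~\ref{claima}, on each interior strip $[1+\epsilon,\beta-\epsilon]\times[0,T]$ we have $f\in[1+\delta,\alpha-\delta]$ for some $\delta>0$, so $Q(f)$ is bounded away from zero and the equation is uniformly parabolic there. The substitution $\tilde w = e^{-Kt}w$ with $K$ sufficiently large makes the zero-order coefficient non-positive, so the standard parabolic minimum principle rules out any negative interior minimum of $\tilde w$. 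Combined with $w(x,0)\geq 0$ and $w\equiv 0$ on the spatial boundary, we conclude $w\geq 0$ on all of $[1,\beta]\times[0,\infty)$.

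The main technical issue I anticipate is the degeneracy of $Q$ at the endpoints, which could make $Q'(f)L[f]$ seem problematic. However, since $w$ already vanishes on $\{x=1\}\cup\{x=\beta\}$, the minimum principle is only ever invoked on interior compact subsets, where $Q(f)$ is strictly positive and the coefficients of the linearized equation are bounded; this boundedness follows from interior parabolic regularity applied to the strictly parabolic equation (\ref{evolutionf}) on such subsets, together with the a priori estimate (\ref{bound}). Passing $\epsilon\to 0$ then gives $w\geq 0$ on the full cylinder, as desired.
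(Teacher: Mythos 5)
Your proposal follows the paper's proof exactly: the paper likewise establishes $\partial_t f|_{t=0}>0$ by a direct computation (which your identity $L[f_0]=\tfrac{(n-1)(\alpha-\beta)}{x^2(\beta-1)}>0$ makes explicit) and then takes the time derivative of (\ref{evolutionf}) and applies the strong maximum principle. The only loose point is your localization to strips $[1+\epsilon,\beta-\epsilon]$, whose lateral boundaries carry no a priori sign information on $w=\partial_t f$; the intended (and correct) way to package the argument is to note that $w$ vanishes identically on the true spatial boundary $\{x=1\}\cup\{x=\beta\}$, so a negative minimum could only occur at an interior point, where $f$ is strictly between $1$ and $\alpha$, $Q(f)>0$, the coefficients are locally bounded, and the pointwise maximum-principle argument goes through.
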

\emph{Proof of the claim:}  A direct computation shows that
\[
\frac{\partial f}{\partial t}|_{t=0}>0.
\]

We can prove the claim by taking time derivative of (\ref{evolutionf}), and applying the strong maximum principle.

Combining Claim~\ref{claima} and Claim~\ref{claimb}, we find that $f(x,t)$ is monotone increasing to a limiting function  $ f_{\infty}(x)=\lim_{t\to\infty}f(x,t)$ and
\begin{align} \label{2.7}
f_{\infty}(x)\leq \tilde{f}(x).
\end{align}

However, there is only one solution of the form (\ref{1.13}) satisfying (\ref{2.7}), which is exactly $\tilde{f}(x)$. Thus we have proved that $f_{\infty}(x)=\tilde{f}(x)$ for Case 1.

\emph{ \textbf{Case 2: $\alpha<\beta$,$\frac{\alpha\beta^{n-1}-1}{\beta^n-1}>\frac{n-1}{n}$}}\ \\
By Lemma~\ref{lemma1.5}, we may choose the following  initial value for the flow
\[
f_{_0}(x)=\frac{\alpha-1}{\beta-1}(x-1)+1,
\]
Similar to the Case 1, we have
\begin{claim}\label{2.61}
$$f(x,t)-\tilde{f}(x)>0, \ \ \ \forall t\in{\mathbb{R}}, x\in (1, \beta).$$
\end{claim}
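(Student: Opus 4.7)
The plan is to follow the template of the proof of Claim~\ref{claima}, with the direction of the inequality reversed, exploiting the fact that in Case 2 the barrier $\tilde f$ is strictly convex rather than strictly concave. By Lemma~\ref{lemma1.5}, I am free to fix the initial datum, and I will again choose the linear interpolant
\[
f_{0}(x)=\frac{\alpha-1}{\beta-1}(x-1)+1
\]
joining $(1,1)$ and $(\beta,\alpha)$. Since $\alpha<\beta$, the constant $b=\frac{\beta^{n}-\alpha\beta^{n-1}}{\beta^{n}-1}$ in the formula (\ref{finfty}) is strictly positive, so $\tilde f''(x)=n(n-1)b\,x^{-n-1}>0$ on $[1,\beta]$ and $\tilde f$ is strictly convex. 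A strictly convex function lies strictly below its chord on the interior of the interval over which it is defined, which gives the initial separation $f_{0}(x)-\tilde f(x)>0$ for every $x\in(1,\beta)$, while equality holds at the endpoints.

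Next, I would set $h(x,t)=f(x,t)-\tilde f(x)$. Because $\tilde f$ is a time-independent solution of the linear ODE (\ref{ode}), subtracting it from (\ref{evolutionf}) shows that $h$ satisfies
\[
\frac{\partial h}{\partial t}=Q(f)\left[\frac{\partial^{2} h}{\partial x^{2}}+(n-1)\frac{h_{x}}{x}-(n-1)\frac{h}{x^{2}}\right],
\]
together with the Dirichlet boundary conditions $h(1,t)=h(\beta,t)=0$ and the strictly positive initial datum $h(\,\cdot\,,0)$ identified in the previous paragraph. Since $f(\,\cdot\,,t)$ is strictly monotone with $f(1,t)=1$ and $f(\beta,t)=\alpha$, one has $1<f(x,t)<\alpha$ for $x\in(1,\beta)$; by the characterization of $Q$ in Proposition~2.4 this forces $Q(f(x,t))>0$ throughout the open strip $(1,\beta)\times[0,\infty)$. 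Hence the equation for $h$ is uniformly parabolic on every compact sub-interval of $(1,\beta)$, and the strong maximum principle rules out an interior zero of $h$, yielding $h(x,t)>0$ for all $t\geq 0$ and $x\in(1,\beta)$.

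The only point that requires some care is the fact that $Q$ vanishes at the endpoints $f=1$ and $f=\alpha$, so the parabolic equation for $h$ degenerates on the lateral boundary; however, $h$ already vanishes there by construction, so the degeneracy plays no role and the argument proceeds precisely as in Claim~\ref{claima}. I expect no substantive obstacle: the whole claim is a direct parabolic-comparison statement, and the structural analogue in Case~1 has already been carried out in the text.
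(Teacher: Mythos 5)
Your proposal is correct and follows exactly the route the paper intends: the authors omit the proof of Claim~\ref{2.61} precisely because it is the mirror image of Claim~\ref{claima}, and your argument (linear initial datum lying strictly above the strictly convex barrier $\tilde f$ since $b=\frac{\beta^{n}-\alpha\beta^{n-1}}{\beta^{n}-1}>0$ when $\alpha<\beta$, followed by the strong maximum principle for $h=f-\tilde f$ with the degeneracy of $Q$ confined to the lateral boundary where $h$ vanishes) is that mirror image. No gaps.
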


\begin{claim}\label{2.71}
$$\frac{\partial f}{\partial t}\geq 0.$$
\end{claim}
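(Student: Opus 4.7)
The plan follows the template of Claim~\ref{claimb}: compute $\partial_t f|_{t=0}$ directly from (\ref{evolutionf}) using the linear form of $f_0$, then propagate the resulting sign to all $t>0$ via the strong maximum principle applied to the linearization.

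For the initial-time step, $f_0(x)=\tfrac{\alpha-1}{\beta-1}(x-1)+1$ satisfies $f_0''\equiv 0$, so substituting into the right-hand side of (\ref{evolutionf}) yields
\[
\partial_t f\big|_{t=0} \;=\; Q(f_0)\,\frac{n-1}{x^2}\,\bigl(xf_0'(x)-f_0(x)\bigr) \;=\; Q(f_0)\,\frac{(n-1)(\alpha-\beta)}{(\beta-1)\,x^2}
\]
on $(1,\beta)$, where $Q(f_0)\ge 0$ with equality only at the endpoints. This expression, combined with the Case~2 hypotheses, is the base datum for the propagation argument.

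For the propagation step, differentiating (\ref{evolutionf}) in $t$ gives a linear parabolic equation for $h:=\partial_t f$,
\[
\partial_t h \;=\; Q(f)\,Lh \;+\; Q'(f)\,(Lf)\,h,
\]
where $L$ denotes the spatial operator $\partial_x^2 + (n-1)x^{-1}\partial_x - (n-1)x^{-2}$ from (\ref{evolutionf}). Since $f(1,t)\equiv 1$ and $f(\beta,t)\equiv \alpha$ are time-independent, $h$ vanishes on the spatial boundary for all $t$. The strong maximum principle for this linear equation (with possibly sign-changing zeroth-order coefficient $Q'(f)(Lf)$) then propagates the sign of $h(\cdot,0)$ to all $t>0$ on $(1,\beta)$, yielding the stated monotonicity; combined with Claim~\ref{2.61}, this gives the monotone convergence $f(\cdot,t)\to \tilde{f}$ exactly as in Case~1.

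The main obstacle is the degeneracy of $Q(f)$ at the endpoints $x=1,\beta$, where the equation fails to be uniformly parabolic. On any compact subinterval of $\{1<f<\alpha\}$ the equation is uniformly parabolic and the strong maximum principle is standard. Extending the sign preservation up to the boundary requires combining the vanishing of $h$ on $\{x=1,\beta\}$ with careful control of the vanishing rate of $Q$ near those points---essentially a boundary barrier argument, or approximation by non-degenerate problems on $[1+\epsilon,\beta-\epsilon]$ followed by a limit $\epsilon\to 0$. This degenerate-boundary issue is the most delicate technical point and is the step where the argument differs most substantively from a textbook application of the parabolic maximum principle.
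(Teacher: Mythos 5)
Your overall strategy is the one the paper intends: the proof of Claim~\ref{2.71} is omitted in the text and explicitly deferred to the template of Claim~\ref{claimb} (compute the sign of $\partial_t f$ at $t=0$ for the chosen initial datum, then differentiate (\ref{evolutionf}) in time and propagate that sign by the strong maximum principle), and your propagation step --- including the honest discussion of the degeneracy of $Q$ at $x=1,\beta$, which the paper does not address at all --- is the right shape. The problem is the base datum. Your own computation gives
\[
\partial_t f\big|_{t=0}=Q(f_0)\,\frac{(n-1)(\alpha-\beta)}{(\beta-1)\,x^2},
\]
and in Case 2 one has $\alpha<\beta$, so this quantity is \emph{nonpositive} on $(1,\beta)$. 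The maximum principle then propagates $h=\partial_t f\le 0$, not $h\ge 0$: your argument, carried out correctly, proves $\partial_t f\le 0$, which is the opposite of the claim as displayed, yet you conclude that it yields ``the stated monotonicity'' without remarking on the sign. As written the proof does not establish the inequality $\partial_t f\ge 0$; indeed for the linear initial datum $f_0$ that inequality already fails in the interior at $t=0$.

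What saves the mathematics is that the sign in the paper's statement is evidently a typo. In Case 2 the function $\tilde f$ is convex, so the linear $f_0$ lies above it (this is Claim~\ref{2.61}); what the convergence argument needs is precisely $\partial_t f\le 0$, so that $f_t$ decreases, is trapped between $\tilde f$ and $f_0$, and converges to the unique solution of the form (\ref{1.13}) in that range, namely $\tilde f$. The paper itself later invokes the claim in Case 4 in the form ``$f_t\searrow f_\infty$'', confirming the intended direction. You should therefore either prove the claim with the corrected sign $\partial_t f\le 0$ and adjust the accompanying sentence (``monotone decreasing'' rather than ``increasing''), or exhibit a different admissible initial datum for which $\ge 0$ holds; you cannot keep both the linear $f_0$ and the inequality as displayed. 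Everything else in your write-up --- the linearized equation for $h$, the vanishing of $h$ on the spatial boundary, and the approximation by nondegenerate problems on $[1+\epsilon,\beta-\epsilon]$ --- is sound and consistent with the paper's intended argument.
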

For simplicity, we omit the proofs of these claims.

Since $f(x,t)$ is monotone increasing with respect to $t$, and $f_{\infty}(x)\geq \tilde{f}(x)$. $f_{\infty}$ has to be the unique solution satisfying (\ref{1.13}), which is $\tilde{f}$.  We have thus finished the proof for Case 2.\\

\emph{ \textbf{Case 3: $\alpha<\beta$,$\frac{\alpha\beta^{n-1}-1}{\beta^n-1}=\frac{n-1}{n}$}}\ \\
The proof is exactly same as that for Case 2. \\

\emph{ \textbf{Case 4: $\alpha<\beta$,$\frac{\alpha\beta^{n-1}-1}{\beta^n-1}<\frac{n-1}{n}$}}\ \\
Define \[
\tilde{g}(x) =
\begin{cases}
1, & 1\leq x\leq \lambda; \\
g(x), & \lambda \leq x\leq \beta.
\end{cases}
\]
It follows easily that Claim~\ref{2.61} and Claim~\ref{2.71} are still valid with $\tilde g$ replacing $\tilde f$. Since $f_{\infty}(x)$ satisfies (\ref{1.13}) and $f_{t}(x)\searrow f_{\infty}(x)$ as $t\to\infty$,
$$f_{\infty}(x)=\sup\{f(x)| f(x) \  {\rm satisfies \ (\ref{1.13})} \}.$$ By the characterization of $\lambda$ and $g(x)$, we obtain our conclusion.

It is easy to see that  $\lambda$ is the unique solution of
\begin{align} \label{1.14}
(n-1)\frac{\beta}{\lambda}+\frac{\lambda^{n-1}}{\beta^{n-1}}=n\alpha
\end{align} such that  $\lambda\in(1,\beta)$, and $g'(\lambda)=0$.

We have proved Theorem~\ref{111}.\end{proof}

\begin{rem}
It is interesting to remark that $\tilde{g}$ in Case 4 of Theorem~\ref{111} arises as a solution to an obstacle problem(cf.~\cite{C}).
In fact, in the convex set
\begin{align}
K:=\{ f\in H^1([1,\beta]), f(1)=1, f(\beta)=\alpha, f\geq 1 \},
\end{align}
we consider the energy functional $E: K\to \mathbb{R}$
\begin{align}
E(f)=\frac{1}{2}\int_1^{\beta} (x^{n-1}f'^2+(n-1)x^{n-3}f^2) dx.
\end{align}

The unique minimizer of $E$ in $K$ satisfying
\begin{align} \label{2.6}
\Psi_{\{f>1\}} (f''+(n-1)\frac{f'}{x}-(n-1)\frac{f}{x^2})=0.
\end{align}
\end{rem}\ \\

\begin{rem}
It worths pointing out that for all cases of Theorem~\ref{111}, $f_{\infty}'(x)$ is continuous. Further more, for Case 3 and Case 4, we have
$$f_{\infty}'(x)=0, \ \ {\rm if}\ \ f_{\infty}(x)=1.$$
This is an important feature of the limiting function that will indicate geometric properties for the geometric flow.
\end{rem}

The limiting behavior described for $f(x,t)$ can be used to determine the convergence behavior of metrics following the inverse $\sigma_{k}$ flow.

\begin{proof} [Proof of Main Theorem 1: $J$-flow case]\ We will divide the proof into four cases, just as in the proof of Theorem~\ref{111}.\\

\emph{\textbf{Case 1: $\alpha > \beta$}}\ \\
First, we claim that there exists a uniform positive lower bound for $\frac{\partial f}{\partial x}$. Suppose not, then there is sequence of $(x_n, t_n)$ such that
\[
\frac{\partial f}{\partial x}(x_n,t_n)\to 0, \quad t_n\to \infty.
\]
Then there must be an accumulation point $x_{\infty}$ in $[1,\beta]$ for a subsequence of $\{x_n\}$, which still denoted by $\{x_n\}$ for simplicity. On the other hand
\[
\lim_{n\to \infty} \frac{\partial f}{\partial x}(x_n,t_n) =f_{\infty}'(x_{\infty})\neq 0.
\]
Thus this contradiction proves the claim.
Therefore there exists a universal constant $\epsilon$ such that $\frac{\partial f}{\partial x}\geq \epsilon>0$. Consequently,  there exists a universal constant $C>0$ such that along the flow (\ref{jflow})
 \[
tr_{\omega}\chi_{\varphi}=(n-1)\frac{v'}{u'}+\frac{v''}{u''}=(n-1)\frac{x}{f}+\frac{1}{\frac{\partial f}{\partial x}}<C.
\]
Higher order estimates of $\chi_{\varphi}$ follow from Evans-Krylov and Schauder estimates. Consequently $\chi_t$   converges smoothly to $\chi_{\infty}$, which solves the critical equation (\ref{critical}). \\

\emph{ \textbf{Case 2: $\alpha<\beta$,$\frac{\alpha\beta^{n-1}-1}{\beta^n-1}>\frac{n-1}{n}$}}\ \\
Similar to case 1, since $\frac{d f_{\infty}}{d x} \geq \epsilon >0$, we may obtain a uniform positive lower bound for $\frac{\partial f}{\partial x}$.
Thus we have smooth convergence of $\chi_t$ to $\chi_{\infty}$ for the $J$-flow.\\

\emph{ \textbf{Case 3: $\alpha<\beta$,$\frac{\alpha\beta^{n-1}-1}{\beta^n-1}=\frac{n-1}{n}$}}\ \\
In this case, the previous argument fails since  $f_{\infty}'(1)=0$. A uniform positive lower bound on $\frac{\partial f}{\partial x}$ is not expected. However, we have the following

\begin{claim}\label{new} For any $\epsilon>0$, if  $ f(x,t)\geq 1+\epsilon$, there exists a positive constant $c=c(\epsilon)$ depending only on $\epsilon$, such that \[
\frac{\partial f}{\partial x}\geq c(\epsilon).
\]
\end{claim}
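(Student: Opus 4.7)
The plan is to argue by contradiction combined with a compactness argument. Suppose the claim fails: there exist $\epsilon>0$ and sequences $x_n\in[1,\beta]$, $t_n\ge 0$ with $f(x_n,t_n)\ge 1+\epsilon$ yet $f_x(x_n,t_n)\to 0$. I would first rule out the possibility that $\{t_n\}$ is bounded. For any finite $T$, long-time existence of the flow together with the Calabi-ansatz asymptotics $u''(\rho)\sim u_0'(0)\,e^{\rho}$ and $v''(\rho,t)\sim v_0'(0,t)\,e^{\rho}$ as $\rho\to-\infty$ (with analogous decay at $+\infty$) exhibit $f_x=u''/v''$ as a continuous strictly positive function on the compact set $[1,\beta]\times[0,T]$, hence bounded below there by a positive constant; so we may pass to a subsequence with $t_n\to\infty$.

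Extracting a further subsequence with $x_n\to x_*\in[1,\beta]$, the proof of Theorem~\ref{111} in Case 3 shows $f(\cdot,t)$ is monotone in $t$ with pointwise limit $\tilde f$, and Dini's theorem combined with the continuity of $\tilde f$ upgrades this to uniform convergence on $[1,\beta]$. Therefore $\tilde f(x_*)=\lim f(x_n,t_n)\ge 1+\epsilon$, which forces $x_*>1$ since $\tilde f(1)=1$. In the interior subcase $x_*\in(1,\beta)$, the limit value $\tilde f(x_*)$ lies in $(1,\alpha)$, so the $C^\infty_{\mathrm{loc}}$ convergence of $f\to\tilde f$ on the open set $\{1<\tilde f<\alpha\}$ applies and yields $f_x(x_n,t_n)\to\tilde f'(x_*)$. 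In Case 3 the explicit formula $\tilde f(x)=\frac{n-1}{n}x+\frac{1}{nx^{n-1}}$ shows $\tilde f$ is strictly convex on $[1,\beta]$ with $\tilde f'(1)=0$, so $\tilde f'(x_*)>0$, contradicting $f_x(x_n,t_n)\to 0$.

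The main obstacle is the remaining subcase $x_*=\beta$, which lies on the boundary of the region where smooth convergence has been established. My plan here is to invoke the uniform a priori bound \eqref{bound}, which in Calabi-ansatz variables reads $(n-1)f/x+f_x\ge C_1$, with $C_1=\min_{[1,\beta]}\bigl((n-1)f_0/x+f_0'\bigr)$ depending on the initial datum. By Lemma~\ref{lemma1.5} I may take the linear datum $f_0(x)=\frac{\alpha-1}{\beta-1}(x-1)+1$; a direct computation gives
\[
\Bigl((n-1)\frac{f_0}{x}+f_0'\Bigr)'=\frac{(n-1)(\alpha-\beta)}{(\beta-1)x^2}<0
\]
in Case 3 (since $\alpha<\beta$), so the minimum is attained at $x=\beta$, with $C_1=\frac{(n-1)\alpha}{\beta}+\frac{\alpha-1}{\beta-1}$ and therefore $C_1-\frac{(n-1)\alpha}{\beta}=\frac{\alpha-1}{\beta-1}>0$. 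Combined with $f\le\alpha$ this yields $f_x\ge C_1-(n-1)\alpha/x$, and choosing $\delta>0$ so small that $C_1-(n-1)\alpha/(\beta-\delta)\ge\frac{\alpha-1}{2(\beta-1)}$ produces a uniform positive lower bound $f_x(x,t)\ge\frac{\alpha-1}{2(\beta-1)}$ on $[\beta-\delta,\beta]\times[0,\infty)$. Since $x_n\to\beta$ eventually forces $x_n\in[\beta-\delta,\beta]$, this again contradicts $f_x(x_n,t_n)\to 0$ and finishes the argument.
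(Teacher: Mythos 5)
Your core argument is the paper's own: argue by contradiction, extract $x_n\to x_*\in[1,\beta]$, use the (uniform) convergence $f(\cdot,t)\to f_\infty$ to deduce $f_\infty(x_*)\ge 1+\epsilon$, and then use the locally smooth convergence on the region $\{1<f_\infty<\alpha\}$ to conclude $f_x(x_n,t_n)\to f_\infty'(x_*)\neq 0$, a contradiction. The paper's proof is exactly this and stops there. You go further in two places. Ruling out bounded $t_n$ is routine. More substantively, you isolate the subcase $x_*=\beta$, where $f_\infty(\beta)=\alpha$ sits on the boundary of the region in which $C^1_{\mathrm{loc}}$ convergence has been established; the paper passes over this point in silence, so your observation that it requires a separate argument is a genuine improvement in rigor. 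Your fix for it --- rewriting the a priori bound (\ref{bound}) as $(n-1)f/x+f_x\ge C_1$ and computing $C_1$ explicitly for the linear initial datum --- is correct for that datum: the minimum of $(n-1)f_0/x+f_0'$ is indeed attained at $x=\beta$ and exceeds $(n-1)\alpha/\beta$ by $\frac{\alpha-1}{\beta-1}$, which yields the uniform positive lower bound on $f_x$ near $x=\beta$.

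One caveat on that boundary step. Lemma~\ref{lemma1.5} asserts only a uniform $C^0$ bound on the difference of the potentials, and hence that the limits $f_\infty$ coincide; it does not transfer a pointwise bound on $f_x(x,t)$ at finite times from the flow with linear initial datum to the flow with the actual initial datum. Your interior argument is insensitive to this (it uses only $f_\infty$ and local smooth convergence, both independent of the initial datum), but the boundary argument is not: for a general Calabi-ansatz initial datum the constant $C_1=\min_x\bigl((n-1)f_0/x+f_0'\bigr)$ need not exceed $(n-1)\alpha/\beta$ (take $f_0$ close to $1$ on most of $[1,\beta]$ and rising sharply near $\beta$), in which case $C_1-(n-1)\alpha/x$ is negative near $x=\beta$ and the estimate gives nothing. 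So as written the subcase $x_*=\beta$ is settled only for the linear initial datum. This is consistent with the paper's own implicit reduction to that datum, but if Claim~\ref{new} is to be applied to an arbitrary initial metric satisfying the Calabi Ansatz, the boundary subcase still needs an argument that does not pass through Lemma~\ref{lemma1.5}.
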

\begin{proof}
Suppose that the claim does not hold.
Then there exists sequence of $(x_n, t_n)$ such that
\[
\frac{\partial f}{\partial x}(x_n,t_n)\to 0, \quad t_n\to \infty.
\]
We also have that  a subsequence of $x_n$ converges to $x_{\infty}\in [1,\beta]$, for simplicity still denoted by $x_n$. Since $f(x_n,t_n)\geq 1+\epsilon$, then $f_{\infty}(x_{\infty})\geq 1+\epsilon$.
Consequently
\[
\lim_{n\to \infty} \frac{\partial f}{\partial x}(x_n,t_n) =f_{\infty}'(x_{\infty})\neq 0,
\]
we get a contradiction, thus we have proved the claim.
\end{proof}

We continue our proof of Case 3, Main Theorem 1 for $J$-flow. For any compact subset $K \subset X\setminus E_0$, there exists a constant $\epsilon>0$ such that $K\subset u'^{-1}([1+\epsilon, \alpha])$. By Claim~\ref{new},  for any $t>0$, the corresponding $f(x,t)$ defined by (\ref{cartesion}) satisfies $f_{x}(x,t)\geq c(\epsilon)$ for a given $c(\epsilon)>0$. We may conclude that  $\chi_t$   converges smoothly to $\chi_{\infty}$, following arguments given in the proof of Case 1. We have thus established the smooth convergence of the J-flow away from $E_{0}$.\\

The corresponding function
\[
v_{\infty}'(\rho)=f_{\infty}^{-1}(u_{\infty}'(\rho))
\]
is thus a potential for a K\"{a}hler metric with singularity along $E_0$.
In this case, it is easy to see that  $dd^{c} v_{\infty}$ can be extended as a smooth K\"{a}hler form to $E_{\infty}$.

On the other hand, near $\rho=-\infty$, the resulting K\"{a}hler form is not smooth. In fact,
$v_{\infty}'(\rho) \sim 1+k e^{\frac{\rho}{2}}$ near $\rho=-\infty$. In the local coordinate patch $(z_1,\cdots, z_n)$ centered at any $p\in E_0$, where $E_0 \cap U=\{z_1=0\}$, the metric is equivalent to $\frac{1}{|z_1|}dz_1\wedge d\bar{z_1} + v_{\infty}'(-\infty)\omega_{FS}$, where $\omega_{FS}$ is the Fubini-Study metric on $E_0$. Thus the metric is singular with cone angle $\pi$ transverse to $E_0$.\\

\emph{ \textbf{Case 4: $\alpha<\beta$,$\frac{\alpha\beta^{n-1}-1}{\beta^n-1}<\frac{n-1}{n}$}}\ \\
Similar to Case 3, we may prove that the flow is convergent smoothly away from $E_0$.
However, now we have
\[
\lim_{\rho\to -\infty} v'_{\infty}(\rho)=\lambda>1.
\]

The convergence
\[
v'(\rho, t)\to v'_{\infty}(\rho)
\]
and its  resulting metric flow may be understood, from a geometric point of view, as two distinct behaviors: one is a  $\delta$-concentration on $E_0$ with coefficient $(\lambda-1)$; the other is a smooth convergence of metrics away from $E_{0}$.

For $x\in(\lambda, \beta]$, it is easy to see that $v'_{\infty}$ is the corresponding limit solution for the $J$-flow of the triple $(X,\omega, \chi)$ with
\[
\omega \in \alpha[E_{\infty}]-[E_0], \quad \chi \in \beta[E_{\infty}]-\lambda[E_0].
\]

Since $g'(\lambda)=0$, this corresponds to the Case 3. One can readily check by scaling $\beta[E_{\infty}]-\lambda[E_0]$ to $\frac{\beta}{\lambda}[E_{\infty}]-[E_0]$, the condition of the Case 3 is satisfied since
\[
\frac{\alpha (\frac{\beta}{\lambda})^{n-1}-1}{(\frac{\beta}{\lambda})^n-1}=\frac{n-1}{n}.
\]
We have thus finished the proof.
\end{proof}

\section{General inverse $\sigma_k$-flow on $\mathbb{P}^n \# \overline{\mathbb{P}^n}$}

In this section, we discuss the general inverse $\sigma_k$-flow (\ref{inversek}) on $X=\mathbb{P}^n \# \overline{\mathbb{P}^n}$. We follow the discussion of Section 2, however the parabolic equation analogous to (\ref{evolutionf}) is more complicated.

\subsection{Generalized $J$-flow case ($k=1$)}\

Let $(X, \omega,\chi)$ be given as before. Assume both $\omega$ and $\chi$ safisfy Calabi Ansatz. The general inverse $\sigma_k$-flow (\ref{inversek}) for $k=1$ can be written as
\begin{align}
\frac{\partial v}{\partial t}=F((n-1)\frac{u'}{v'}+\frac{u''}{v''}).
\end{align}
Again suppose $(v'(\rho,t), u'(\rho))$ is a family of parametric curves implicitly given by
\begin{align} \label{2.3}
f(v'(\rho,t),t)=u'(\rho),
\end{align}
then the evolution of $f$ is
\begin{align} \label{2.5}
\frac{\partial f}{\partial t}=-F'Q(f) [\frac{\partial^2 f}{\partial x^2}+(n-1)\frac{\frac{\partial f}{\partial x}}{x}-(n-1)\frac{f}{x^2}]
\end{align}
By (\ref{concavity}), $F'<0$, the convergence behavior of (\ref{2.5}) is same as that of (\ref{evolutionf}).

\subsection{General case ($k>1$)}\

For general $k>1$, (\ref{inversek}) reduces to
\begin{align}
\frac{\partial v}{\partial t}=F( {n-1 \choose k-1} (\frac{u'}{v'})^{k-1} \frac{u''}{v''}+{n-1 \choose k} (\frac{u'}{v'})^k)-F(c_k),
\end{align}
then $f$ defined by (\ref{2.3}) evolves as
\begin{align}
\frac{\partial f}{\partial t}= -F' u''(\frac{f}{x})^{k-2}{n-1 \choose k-1} [\frac{\partial^2 f}{\partial x^2}\frac{f}{x}+(k-1) \frac{(\frac{\partial f}{\partial x})^2}{x}+ (n+1-2k) \frac{f \frac{\partial f}{\partial x}}{x^2} -(n-k)\frac{f^2}{x^3}].
\end{align}

Define $g(x,t):=f^{k}(x,t)$, then the evolution of $g$ is
\begin{align}\notag
\frac{\partial g}{\partial t}&=-{n-1\choose k-1}F' (\frac{f}{x})^{k-1}u''[\frac{\partial^2 g}{\partial x^2}+(n+1-2k)\frac{\frac{\partial g}{\partial x}}{x}-k(n-k)\frac{g}{x^2}]\\ \label{2.1}
&:=-F' Q(g)[\frac{\partial^2 g}{\partial x^2}+(n+1-2k)\frac{\frac{\partial g}{\partial x}}{x}-k(n-k)\frac{g}{x^2}].
\end{align}

\begin{proof} [Proof of the Main Theorem 1]

As in Section 2, we need to discuss the corresponding ODE:
\begin{align} \label{odeg}
\frac{\partial^2 g}{\partial x^2}+(n+1-2k)\frac{\frac{\partial g}{\partial x}}{x}-k(n-k)\frac{g}{x^2}=0
\end{align} with boundary values $g(1)=1$ and $g(\beta)=\alpha^k$.

One can solve it explicitly to get the solution
\begin{align} \label{2.2}
g_{\infty}(x)=ax^{k}+\frac{b}{x^{n-k}}, \quad  \quad a=\frac{\alpha^k\beta^{n-k}-1}{\beta^n-1}, \quad b=\frac{\beta^{n}-\alpha^k\beta^{n-k}}{\beta^n-1}.
\end{align}

It follows that
\[
1<g_{\infty}(x)<\alpha^k \quad \text{when}\  x \in (1,\beta),
\]
 except the case
$\alpha<\beta$ and $\frac{\alpha^k\beta^{n-k}-1}{\beta^n-1}<\frac{n-k}{n}$.

Then in that case define
\begin{align} \notag
\lambda=\inf\{ \lambda'\ | \ g\  \text{satisfies (\ref{odeg}) }, \,
  g(\lambda')=1, g(\beta)=\alpha^k \, \text{and} \, g\geq 1\}.
\end{align}

Let $g$ be the corresponding solution of (\ref{odeg}) with $g(\lambda)=1$ and $g(\beta)=\alpha^k$. Then the unique limit in this case is

\[
g_{\infty}(x) =
\begin{cases}
1, & 1\leq x\leq \lambda ;\\
g(x), & \lambda \leq x\leq \beta.
\end{cases}
\]

By the definition of $\lambda$, we have $g'(\lambda)=0$. Then it is easy to check that  $\lambda\in(1,\beta)$ is the unique  solution of
\begin{align} \label{2.4}
(n-k)(\frac{\beta}{\lambda})^{k}+k(\frac{\lambda}{\beta})^{n-k}=n\alpha^{k}.
\end{align}

Finally, notice we still have the universal constants $C_{1},C_{2}>0$ such that
\begin{align} \label{bound1}
0<C_1\leq \sigma_{k}(\underbrace{\frac{u'}{v'}, \cdots \frac{u'}{v'}}_{n-1-times}, \frac{u''}{v''})\leq C_2.
\end{align}

Since
\[
\frac{u'}{v'}\geq \frac{1}{\beta},
\]
 from (\ref{bound1}) we get
a uniform upper bound for $\frac{u''}{v''}$.

The rest of the proof follows that of Section 2.
\end{proof}

\section{Flows on $\mathbb{P}(\mathcal{O}_{\mathbb{P}^n}\oplus \mathcal{O}_{\mathbb{P}^n}(-1)^{\oplus (m+1)})$}

In this section, we consider the general inverse $\sigma_k$-flow on a family of projective bundles over $\mathbb{P}^n$ and its convergence behavior under the assumption both $\omega$, $\chi$ satisfy the Calabi Ansatz. A new geometric limit phenomenon occurs.

Let $E=\mathcal{O}_{\mathbb{P}^n}\oplus \mathcal{O}_{\mathbb{P}^n}(-1)^{\oplus (m+1)}$ be a vector bundle over a projective space $\mathbb{P}^n$, where $\mathcal{O}_{\mathbb{P}^n}$ is the trivial line bundle and $\mathbb{O}_{\mathbb{P}^n}(-1)$ is the tautological line bundle. Let
\[
X_{m,n}=\mathbb{P}(\mathcal{O}_{\mathbb{P}^n}\oplus \mathcal{O}_{\mathbb{P}^n}(-1)^{\oplus (m+1)})
\]
be the projectivization of $E$. $X_{m,n}$ is a $\mathbb{P}^{m+1}$ bundle over $\mathbb{P}^n$ with  $\pi:X_{m,n}\to \mathbb{P}^n$ being the bundle map.  In particular, $X_{0,n}$ is $\mathbb{P}^{n+1}$ blown up at one point. Let $D_{\infty}$ be the divisor in $X_{n,m}$ given by $\mathbb{P}(\mathcal{O}_{\mathbb{P}^n}(-1)^{\oplus (m+1)})$ and $D_0$ be the divisor in $X_{m,n}$ given by $\mathbb{P}(\mathcal{O}_{\mathbb{P}^n}\oplus \mathcal{O}_{\mathbb{P}^n}(-1)^{\oplus m})$. In fact, the additive divisor group $N^1(X_{m,n})$ is spanned by $[D_0]$ and $[D_{\infty}]$. We also define the divisor $D_{H}$ by the pullback of the divisor on $\mathbb{P}^n$ associated to $\mathcal{O}_{\mathbb{P}^n}(1)$. Then
\[
[D_{\infty}]=[D_0]+[D_{H}].
\] Moveover, $D_{\infty}$ is a big and semi-ample divisor and any divisor $a[D_H]+b[D_{\infty}]$ is ample if and only if $a>0$ and $b>0$.

To consider the Calabi Ansatz (See ~\cite{C, SY}), let $\omega_{FS}$ be the Fubini-Study metric on $\mathbb{P}^n$. Let $h$ be the hermitian metric on $\mathcal{O}_{\mathbb{P}^n}(-1)$ such that $Ric(h)=-\omega_{FS}$. Under local trivialization of $E$, we write
\[
e^{\rho}=h(z) |\xi|^2, \xi=(\xi_1, \xi_2, \cdots, \xi_{m+1}),
\] where $h(z)$ is a local representation of $h$. In particular, if we choose an inhomogeneous coordinate $z=(z_1, z_2, \cdots, z_n)$ on $\mathbb{P}^n$, we have
\[
h(z)=1+|z|^2.
\]
We consider K\"{a}hler metrics of following type on $X_{m,n}$:
\begin{align} \label{calabi}
\omega=a\pi^{*}\omega_{FS}+\frac{\sqrt{-1}}{2\pi} \partial \bar{\partial} u(\rho).
\end{align}

According to Calabi~\cite{Ca},  (\ref{calabi}) is K\"{a}hler if and only if
 $a>0$, $u'>0$, $u''>0$, and asymptotic behavior of $u$ satisfies:
 \begin{align} \label{criterion}
 (\dagger).& \quad u_0(r):=u(\ln r) \\\notag
 &\quad \text{is extendable by continuity to a smooth function at} \, r=0,\text{and}\, u_0'(0)>0.\\ \notag
 (\ddagger).& \quad u_{\infty}(r):=u(-\ln r)+b\ln r\quad \\ \notag
 &\text{ is extendable by continuity to a smooth function at} \, r=0\, \text{for some $b$},\text{and}\, u_{\infty}'(0)>0.
 \end{align}

Thus we have $\lim_{\rho\to -\infty} u(\rho)=0$ and $\lim_{\rho \to \infty}u(\rho)=b$. Here $\rho=-\infty$ corresponds $P_0$ and $\rho=\infty$ corresponds to $D_{\infty}$. Furthermore,
\begin{align}
\omega \in a[D_H]+b[D_{\infty}].
\end{align}
Note
\begin{align} \label{3.2}
\omega=(a+u')\omega_{FS}+\frac{\sqrt{-1}}{2\pi} he^{-\rho}(u'\delta_{ij}+he^{-\rho}(u''-u')\bar{\xi_i}\xi_{j}) \nabla\xi_i \wedge \nabla \bar{\xi_j},
\end{align}
where $\nabla \xi_i = d \xi_i +h^{-1}\partial h \xi_i.$

We may now discuss the general inverse $\sigma_k$-flow (\ref{inversek}) on $X_{m,n}$. If $\omega$, $\chi$ are of the form (\ref{calabi}), without loss of generality, we may normalize them so that
\begin{align}
\omega &\in [D_H]+b[D_{\infty}], b>0.\\ \notag
\chi &\in [D_H]+b'[D_{\infty}], b'>0.
\end{align}
Hence we can assume that
\begin{align}
\omega=\omega_{FS}+ b \partial \bar{\partial} u(\rho), \quad \chi=\omega_{FS}+b' \partial \bar{\partial} v(\rho),
\end{align}
with $u$, $v$ satisfying the criterion (\ref{criterion}). The general inverse $\sigma_k$-flow preserves the Calabi Ansatz (\ref{calabi}). We consider the function $f(x,t)$ determined by
\begin{align} \label{3.1}
f(v'(\rho,t),t)=u'(\rho).
\end{align}

\begin{prop}
Consider the general inverse $\sigma_k$-flow on the triple $(X_{m,n}, \omega, \chi)$. If $\omega$, $\chi$ are given as above, then the evolution of $f(x,t)$ defined via (\ref{3.1}) is
\begin{align} \label{evolution}
\frac{\partial f}{\partial t}=-F'u'' (\sigma_k(\underbrace{\frac{1+f}{1+x}}_{n-copies}, \underbrace{\frac{f}{x}}_{m-copies}, f'))'.
\end{align}

\end{prop}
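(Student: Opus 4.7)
The plan is to follow the reduction used in Sections 2 and 3: diagonalize $\omega$ and $\chi$ simultaneously under the Calabi ansatz, identify the eigenvalues of $\chi$ with respect to $\omega$ in the three natural directions of the bundle structure, then apply the chain rule to the implicit definition of $f$ to obtain a one-dimensional PDE.

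First, I would expand $\partial\bar\partial u(\rho) = u'(\rho)\,\partial\bar\partial\rho + u''(\rho)\,\partial\rho\wedge\bar\partial\rho$, using $\partial\bar\partial\log h(z) = \omega_{FS}$ to rewrite $\partial\bar\partial\rho$ as the sum of $\omega_{FS}$ pulled back from the base and a Fubini--Study-type form in the fiber angular directions. This block-diagonalizes the Calabi-ansatz metric $\omega = \omega_{FS} + b\,\partial\bar\partial u$ into three blocks matching the bundle structure: an $n$-dimensional base block of $\mathbb{P}^n$-directions with eigenvalue proportional to $1+u'$, an $m$-dimensional angular fiber block (transverse to the radial $\rho$-direction inside the $\mathbb{P}^{m+1}$-fiber) with eigenvalue proportional to $u'$, and a one-dimensional radial block with eigenvalue proportional to $u''$. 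The analogous decomposition for $\chi$ produces $1+v'$, $v'$, $v''$ in the three blocks. Hence the inverse eigenvalues of $\chi$ with respect to $\omega$, which enter the quantity $\sigma_k(\chi^{-1}) = \sigma_{(m+n+1)-k}(\chi)/\sigma_{m+n+1}(\chi)$ governing the flow, are $\tfrac{1+u'}{1+v'}$ with multiplicity $n$, $\tfrac{u'}{v'}$ with multiplicity $m$, and $\tfrac{u''}{v''}$. Substituting $u' = f$ at $x = v'$, together with $u'' = f_x v''$ (obtained by differentiating $f(v'(\rho,t),t) = u'(\rho)$ in $\rho$), converts these into $\tfrac{1+f}{1+x}$, $\tfrac{f}{x}$, and $f_x$ respectively, so that $\sigma_k(\chi^{-1}) = S(x,t) := \sigma_k\bigl(\tfrac{1+f}{1+x},\tfrac{f}{x},f_x\bigr)$ with the stated multiplicities.

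Next, since the flow preserves the Calabi ansatz, it reduces (after absorbing the constant $b'$ into $v$) to
$$\partial_t v(\rho,t) = F\bigl(S(v'(\rho,t),t)\bigr) - F(c_k).$$
Differentiating in $\rho$ gives $\partial_t v' = F'(S)\,S_x(x,t)\,v''$, and differentiating the implicit relation $f(v'(\rho,t),t) = u'(\rho)$ in $t$ gives $\partial_t f = -f_x\,\partial_t v'$. Combining these identities and using $f_x v'' = u''$ produces exactly $\partial_t f = -F'(S)\,u''\,S_x$, which is the claimed evolution equation.

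The only substantive step is the block-diagonalization in the second paragraph: one must carefully separate the $n$ base directions, the $m$ angular fiber directions, and the single radial direction, and verify that only the base block picks up the additive $1$ from the $\omega_{FS}$ term in the ansatz, while the two fiber blocks do not. Once this three-block decomposition is in place the remainder is a direct chain-rule computation strictly parallel to the ones already carried out in Sections 2 and 3, with no further analytic difficulty.
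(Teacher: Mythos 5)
Your proposal is correct and follows essentially the same route as the paper: read off the eigenvalues of $\chi$ with respect to $\omega$ from the block structure of the Calabi ansatz (the $n$ base directions carrying the extra $1+u'$, the $m$ angular fiber directions carrying $u'$, and the radial direction carrying $u''$), then combine $\partial_t f=-f_x\,\partial_t v'$ with the $\rho$-derivative of the flow and the identity $f_x v''=u''$. The only cosmetic difference is that the paper verifies the chain-rule conversion term by term on each argument of $\sigma_k$ (its equations (3.4)--(3.6)) while you apply it directly to the composite symmetric function; these are the same computation.
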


\begin{proof} From (\ref{3.2}), we may calculate  the eigenvalues of $\chi$ with respect to $\omega$ to be:
\begin{align}
\underbrace{\frac{1+v'}{1+u'}, \cdots, \frac{1+v'}{1+u'}}_{n-times}, \underbrace{\frac{v'}{u'}, \cdots, \frac{v'}{u'}}_{m-times}, \frac{v''}{u''}.
\end{align}

Taking time derivative of (\ref{3.1}), we get
\begin{align} \label{3.7}
\frac{\partial f}{\partial t}=- f' \frac{\partial v'}{\partial t}.
\end{align}
Taking spacial derivative of (\ref{inversek}), we get
\begin{align} \label{3.3}
\frac{\partial v'}{\partial t}= F' (\sigma_k(\underbrace{\frac{1+u'}{1+v'}}_{n-copies}, \underbrace{\frac{u'}{v'}}_{m-copies}, \frac{u''}{v''}))'
\end{align}
We also have
\begin{align} \label{3.4}
f'(\frac{1+u'}{1+v'})'&= f'[\frac{u''}{1+v'}-\frac{(a+u')v''}{(a+v')^2}]\\ \notag
                      &=u'' [\frac{f'}{1+v'}-\frac{1+u'}{(1+v')^2}]= u'' (\frac{1+f}{1+x})'.
\end{align}
Similarly,
\begin{align} \label{3.5}
f'(\frac{u'}{v'})'=u''(\frac{f}{x})'.
\end{align}
We also have
\begin{align} \label{3.6}
f'(\frac{u''}{v''})'&= f'[\frac{u'''}{v''}-\frac{u''v'''}{v''^2}] \\ \notag
&=f'[\frac{f''v''^2+f'v'''}{v''}-\frac{u''v'''}{v''^2}] =u''f''.
\end{align}

Using (\ref{3.3}), (\ref{3.4}), (\ref{3.5}) and (\ref{3.6}) and (\ref{3.7}), we get
\begin{align} \label{evolution2}
\frac{\partial f}{\partial t}=-F'u'' (\sigma_k(\underbrace{\frac{1+f}{1+x}}_{n-copies}, \underbrace{\frac{f}{x}}_{m-copies}, f'))'.
\end{align}
\end{proof}

Following our method developed in Section 2, we consider  the ODE
\begin{align} \label{3.8}
(\sigma_k(\underbrace{\frac{1+f}{1+x}}_{n-copies}, \underbrace{\frac{f}{x}}_{m-copies}, f'))'=0,\ x\in[0,b'],
\end{align}
with the boundary condition $f(0)=0$ and $f(b')=b$.

\begin{prop}
The equation (\ref{3.8}) admits a unique positive monotone increasing solution with $f(0)=0$ and $f(b')=b$ if and only if
\begin{itemize}
\item  $c_k \geq {n\choose k}$ when $k\leq n$, and the corresponding unique solution satisfies $f'(0)>0$ if and only if the strict inequality holds.
\item $c_k >0$ when $k>n$ and the corresponding unique solution always satisfies $f'(0)>0$.
\end{itemize}
Here $c_k$ is the topological constant
\[
\frac{{n+m+1 \choose k}\int_{X_{m,n}} \chi^{m+n+1-k}\wedge \omega^k}{\int_{X_{m,n}} \chi^{m+n+1}}.
\]
\end{prop}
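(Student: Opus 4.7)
The plan is to integrate (\ref{3.8}) once, identify the constant of integration with $c_k$ via a pointwise eigenvalue identity, and then analyze the resulting first-order problem at $x=0$.

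First I would integrate (\ref{3.8}) to obtain the conservation law
$$\sigma_k\!\left(\underbrace{\tfrac{1+f}{1+x}}_{n},\underbrace{\tfrac{f}{x}}_{m},f'\right)=C$$
for some constant $C$. Since $f'$ occupies only one slot of $\sigma_k$, this relation is linear in $f'$ and rearranges as $f'=(C-A(f,x))/B(f,x)$, where $A$ and $B$ denote respectively the $\sigma_k$ and $\sigma_{k-1}$ of the remaining $n+m$ slots, both strictly positive on $\{f>0,\,x>0\}$. Next I would identify $C=c_k$: by the pointwise eigenvalue identity $\sigma_k(\chi_\infty^{-1})=\binom{m+n+1}{k}\chi_\infty^{m+n+1-k}\wedge\omega^k/\chi_\infty^{m+n+1}$ applied to the K\"ahler form $\chi_\infty$ built from $f$ via the Calabi Ansatz, together with the fact that the boundary conditions $f(0)=0$ and $f(b')=b$ are precisely what it takes for $\chi_\infty$ to extend smoothly across $P_0$ and $D_\infty$ in the class $[D_H]+b'[D_\infty]=[\chi]$, integration over $X_{m,n}$ forces $C=c_k$.

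Necessity of the stated inequalities follows from boundary analysis at $x=0$: with $f(0)=0$ and $\gamma:=f'(0)$ one has $f/x\to\gamma$ and $(1+f)/(1+x)\to 1$, so the conservation law becomes the polynomial identity
$$c_k=P(\gamma):=\sigma_k\!\left(\underbrace{1,\ldots,1}_{n},\underbrace{\gamma,\ldots,\gamma}_{m+1}\right)=\sum_{j=0}^{k}\binom{n}{k-j}\binom{m+1}{j}\gamma^{j}.$$
This polynomial has nonnegative coefficients, is strictly increasing on $[0,\infty)$, and satisfies $P(0)=\binom{n}{k}$, which vanishes precisely when $k>n$. Reading off the range of $P$ delivers necessity of $c_k\geq\binom{n}{k}$ when $k\leq n$, of $c_k>0$ when $k>n$, and of $\gamma>0$ in exact correspondence with strict inequality.

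For sufficiency, strict monotonicity of $P$ produces a unique $\gamma\geq 0$ with $P(\gamma)=c_k$, serving as initial slope for the IVP $f(0)=0,\,f'(0)=\gamma,\,f'=(c_k-A(f,x))/B(f,x)$; local existence follows after a standard desingularization at $x=0$ via the ansatz $f(x)=\gamma x+O(x^2)$. The conservation relation combined with the easy monotonicity $\partial_x A<0$ at fixed $f$ shows that whenever $f'$ momentarily vanishes in the interior one has $f''>0$ there, so $f'\geq 0$ is preserved; the same algebra precludes blow-up, hence the solution extends monotonically to all of $[0,b']$. The main obstacle is the endpoint identity $f(b')=b$. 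I would address it by a shooting argument in $\gamma$: defining $\Phi(\gamma):=f_\gamma(b')$ and using smooth dependence on initial data to show continuity and strict monotonicity; the unique $\gamma$ solving $P(\gamma)=c_k$ must then give $\Phi(\gamma)=b$, because any other value of $f(b')$ would produce a smooth K\"ahler form in a class other than $[D_H]+b'[D_\infty]$ and violate the topological identity $C=c_k$ established in the first stage. Uniqueness of the monotone positive solution then follows directly from uniqueness of the IVP.
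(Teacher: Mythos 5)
Your overall strategy diverges from the paper's at the crucial step. The paper integrates (\ref{3.8}) \emph{twice}: using the generating function $G^{m,n}_{1+t}(x+tf)$ it converts the ODE into the purely algebraic relation $G^{m,n,k}_1(f,x)=\alpha G^{m,n}_1(x)+\beta$, so that for each fixed $x$ the value $f(x)$ is the unique positive root of a polynomial in $f$ with positive coefficients, and existence/uniqueness reduce to the single inequality $\alpha G^{m,n}_1(x)\geq a_0(x)$, which an explicit integral formula shows is equivalent to $c_k\geq\binom{n}{k}$. You stop after one integration and are left with a genuinely differential first-order problem, which you then attack by shooting. Your necessity argument (the boundary identity $c_k=P(\gamma)$ with $P(\gamma)=\sigma_k(1^n,\gamma^{m+1})$, $P(0)=\binom{n}{k}$) is essentially the paper's computation of $f'(0)$ and is fine, modulo the unaddressed point that $f'(0^+)$ exists for an arbitrary monotone solution.

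The sufficiency half, however, has two genuine gaps. First, in the borderline case $c_k=\binom{n}{k}$ (with $k\leq n$) your prescribed initial slope is $\gamma=0$, and the initial value problem $f(0)=0$, $f'(0)=0$ for the conservation law $\sigma_k(\cdot)=\binom{n}{k}$ is degenerate: $f\equiv 0$ is itself a solution (since $\sigma_k(1^n,0^{m+1})=\binom{n}{k}$), the right-hand side $(c_k-A(f,x))/B(f,x)$ is not Lipschitz in $f$ near $(0,0)$ because $A$ contains $f/x$, and the desired solution vanishes to higher order at $x=0$. So both your "local existence after desingularization" and your final sentence "uniqueness of the monotone positive solution follows from uniqueness of the IVP" fail precisely in the most delicate case of the proposition; the paper's algebraic reduction sidesteps this entirely because $\alpha G^{m,n}_1(x)-a_0(x)>0$ for $x>0$ still selects a unique positive root. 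Second, your argument that $\Phi(\gamma)=b$ is circular as stated: the identity $C=c_k$ was derived assuming the solution already satisfies $f(b')=b$, so a shooting solution with $f(b')=b''\neq b$ does not "violate" anything unless you additionally prove that the resulting pair of metrics realizes the topological constant for the class $[D_H]+b''[D_\infty]$ and that $b\mapsto c_k(b,b')$ is injective (it is, being a degree-$k$ polynomial in $b$ with positive coefficients, but you must say so); the asserted strict monotonicity of $\Phi$ in $\gamma$ is also unproved and by itself does not pin down the endpoint value. Both gaps are repaired automatically by the paper's second integration, which is the idea your proposal is missing.
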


\begin{proof}
Define
\begin{align} \label{G}
G^{m,n}_{a}(x)=\int_{0}^{x} t^m(t+a)^n dt,
\end{align}
then
\begin{align} \label{3.9}
\frac{d [G^{m,n}_{1+t} (x+tf)]}{dx}&= (x+tf)^m(1+t+x+tf)^n(1+ tf') \\ \notag
                                   &= x^m(1+x)^n(1+t \frac{f}{x})^m(1+ t\frac{f+1}{x+1})^n (1+t f') \\ \notag
                                   &= [G^{m,n}_1(x)]'[1+ t\sigma_1(\underbrace{\frac{1+f}{1+x}}_{n-copies}, \underbrace{\frac{f}{x}}_{m-copies}, f')\\ \notag
                                   &+ t^2\sigma_2(\underbrace{\frac{1+f}{1+x}}_{n-copies}, \underbrace{\frac{f}{x}}_{m-copies}, f')+ \cdots \\ \notag
                                   &\cdots + t^{m+n+1} \sigma_{m+n+1}(\underbrace{\frac{1+f}{1+x}}_{n-copies}, \underbrace{\frac{f}{x}}_{m-copies}, f')].
\end{align}
Let
\[
 G^{m,n,k}_{1}(f,x)=\frac{1}{k!}\frac{d^k}{d t^k}|_{t=0} (G^{m,n}_{1+t}(x+tf)),
\]
taking $k$-th $t$ derivative of (\ref{3.9}) and evaluating at $t=0$, we find ODE (\ref{3.8}) is equivalent to
\begin{align} \label{3.10}
G^{m,n,k}_{1}(f,x)= \alpha G^{m,n}_1(x) +\beta,
\end{align}
for two constants $\alpha$ and $\beta$.

\begin{claim}
\begin{align} \label{3.11}
G^{m,n,k}_1(f,x)=f^ka_k(x)+f^{k-1}a_{k-1}(x)+\cdots +f a_1(x) +a_0(x),
\end{align}
where $a_i(x)$ are polynomials of $x$. In particular,  $a_0(x)={ n\choose k} G^{m,n-k}_1(x)$ when $k\leq n$ and $a_0=0$ when $k>n$.
\end{claim}
\begin{proof}The proof is a straightforward computation based on explicit form of $G^{m,n}_1$ via (\ref{G}).
\end{proof}
From (\ref{3.10}) and (\ref{3.11}), the boundary value of $f$ implies
\[
 \alpha=\frac{G^{m,n,k}_1(b, b')}{G^{m,n}_1(b')} \quad \text{and} \quad \beta=0.
\]

Notice that $\frac{G^{m,n,k}_1(b, b')}{G^{m,n}_1(b')}$ is actually the topological constant $c_k$. This follows from direct computation using metrics $\omega$ and $\chi$ of the form given here. It is also easy to verify that all coefficients $a_i(x)$ are polynomials with positive coefficients. Thus for each fixed $x$ view $G^{m,n,k}_1(f,x)$ as a polynomial of $f$, it admits a unique positive solution if and only if
\[
\alpha G^{m,n}_1(x)\geq a_0(x),
\] with equality holds only at $x=0$.
By definition,
\begin{align}
\alpha G^{m,n}_1(x)-a_0(x)=\int_{0}^x t^m (t+1)^{n-k}(\alpha (t+1)^k-{n \choose k}) dt,
\end{align}
from which $c_k \geq {n\choose k}$ follows.

It is clear that $f'(x)\geq 0$. We claim that the function $f$ is strictly increasing. If not, there exists a $x_0\in [0,b']$ such that $f'(x_0)=0$. At $x_0$, we have
\begin{align}
(\frac{f}{x})'(x_0)=\frac{-f(x_{0})}{x_{0}^2}<0, \quad \text{and} \quad(\frac{1+f(x)}{1+x})'(x_0)=\frac{-f(x_{0})-1}{(1+x_{0})^2}<0.
\end{align}
Both $\frac{f}{x}$ and $\frac{1+f}{1+x}$ are strictly decreasing near $x_{0}$. Note that
\[
\sigma_k(\underbrace{\frac{1+f}{1+x}}_{n-copies}, \underbrace{\frac{f}{x}}_{m-copies}, f')=\alpha.
\]
Therefore, $f''(x_{0})>0$. Hence  the points where
$f'=0$ are discrete. It follows that $f$ is strictly increasing.

Finally, to calculate $f'(0)$, we  expand (\ref{3.10}) near $x=0$ and compare the lowest order terms of both sides.
Assume
\[
f(x)=Ax+\text{higher order terms},
\]
We may derive that
\begin{align}
\frac{1}{k!} \frac{d^k}{dt^k}|_{t=0}\frac{(x+tf)^{m+1}(1+t)^n}{m+1}= \frac{{n\choose k}}{m+1}x^{m+1} + \text{ higher order terms}.
\end{align}
Hence $A=f'(0)>0$ if and only if $\alpha>{n \choose k}$ when $k\leq n$ and $\alpha>0$ when $k>n$.
\end{proof}

If $c_k <{n \choose k}$, the solution of (\ref{3.8}) with the boundary condition $f(0)=0$ and $f(b')=b$ is not positive near $x=0$. We define
\begin{align} \notag
\lambda:=\inf\{ \lambda'\ | \text{\ $f(x)$ solves (\ref{3.8}) s.t.} \,
  f(\lambda')=0, f(b')=b\, \text{and} \, f\geq 0\}
\end{align} and let $f$ be the corresponding solution of (\ref{3.8}) with $f(\lambda)=0$ and $f(b')=b$. By the definition of $\lambda$, $f'(\lambda)=0$.

\begin{prop}\label{rrr}
There exists a unique parameter-triple $(\alpha, \beta, \lambda)$, all positive with $\lambda \in (0,b')$, such that
\begin{align} \label{3.14}
G^{m,n,k}_1(f,x)=\alpha G^{m,n}_1(x)+\beta
\end{align}
defines implicitly a unique positive monotone increasing solution of (\ref{3.8}) on $[\lambda, b']$, satisfying
\begin{align} \label{3.12}
f(\lambda)=0, \quad f(b')=b, \quad f'(\lambda)=0.
\end{align}
\end{prop}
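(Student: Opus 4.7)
The plan is to reduce the three-parameter existence problem to a one-variable root-finding question. First I would use the two conditions $f(\lambda)=0$ and $f'(\lambda)=0$ at the left endpoint to pin down $\alpha$ and $\beta$ as explicit functions of $\lambda$. Writing the ODE $(\ref{3.8})$ in the integrated form $\sigma_k(n\text{ copies of }\tfrac{1+f}{1+x},\ m\text{ copies of }\tfrac{f}{x},\ f')=\alpha$ and evaluating at $(x,f,f')=(\lambda,0,0)$ yields $\alpha={n\choose k}(1+\lambda)^{-k}$. Substituting $f(\lambda)=0$ into the integrated form $(\ref{3.14})$ and using the identity $G^{m,n,k}_{1}(0,\lambda)={n \choose k}G^{m,n-k}_{1}(\lambda)$ — which drops out of the $t$-expansion of $G^{m,n}_{1+t}(\lambda)$ — gives
\[
\beta={n \choose k}\bigl[G^{m,n-k}_{1}(\lambda)-(1+\lambda)^{-k}G^{m,n}_{1}(\lambda)\bigr],
\]
which is positive for $\lambda>0$ since $(1+t)^{k}\le (1+\lambda)^{k}$ on $[0,\lambda]$.

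Next I plug these expressions into the remaining condition $G^{m,n,k}_{1}(b,b')=\alpha G^{m,n}_{1}(b')+\beta$, collapsing the problem to a single scalar equation $\Phi(\lambda)=G^{m,n,k}_{1}(b,b')$, where
\[
\Phi(\lambda)={n \choose k}(1+\lambda)^{-k}\bigl[G^{m,n}_{1}(b')-G^{m,n}_{1}(\lambda)\bigr]+{n \choose k}G^{m,n-k}_{1}(\lambda).
\]
A direct differentiation, after cancellation of two opposite terms of the form $\lambda^{m}(1+\lambda)^{n-k}$, simplifies to $\Phi'(\lambda)=-k{n \choose k}(1+\lambda)^{-k-1}[G^{m,n}_{1}(b')-G^{m,n}_{1}(\lambda)]$, strictly negative on $(0,b')$. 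At the endpoints, $\Phi(0)={n \choose k}G^{m,n}_{1}(b')$ and $\Phi(b')={n \choose k}G^{m,n-k}_{1}(b')=a_{0}(b')$. The hypothesis $c_{k}<{n\choose k}$, together with the identity $c_{k}=G^{m,n,k}_{1}(b,b')/G^{m,n}_{1}(b')$ established in the proof of Proposition~4.2, forces $\Phi(0)>G^{m,n,k}_{1}(b,b')$; on the other side, the expansion $G^{m,n,k}_{1}(b,b')=\sum_{i=0}^{k}a_{i}(b')b^{i}$ with each $a_{i}(b')>0$ for $i\ge 1$ gives $\Phi(b')=a_{0}(b')<G^{m,n,k}_{1}(b,b')$. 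Strict monotonicity and the intermediate value theorem then produce a unique $\lambda\in(0,b')$, and hence a unique positive triple $(\alpha,\beta,\lambda)$.

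Finally I verify that this triple does produce the claimed $f$ on $[\lambda,b']$. A short computation gives $a_{1}(x)={n \choose k-1}x^{m}(x+1)^{n-k+1}>0$ for $x>0$, so $\partial_{f}G^{m,n,k}_{1}(f,x)>0$ on $\{f\ge 0,x>0\}$; thus $(\ref{3.14})$ determines $f$ uniquely and smoothly as a non-negative function of $x$ on $(\lambda,b']$. Differentiating $(\ref{3.14})$ at $x=\lambda$ and using $a_{0}'(\lambda)={n\choose k}\lambda^{m}(1+\lambda)^{n-k}=\alpha G^{m,n}_{1}{}'(\lambda)$ gives $a_{1}(\lambda)f'(\lambda)=0$, confirming $f'(\lambda)=0$. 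Monotonicity of $f$ on $[\lambda,b']$ follows exactly as at the end of the proof of Proposition~4.2: at any interior critical point $x_{0}$ with $f(x_{0})>0$, both $(\tfrac{f}{x})'(x_{0})$ and $(\tfrac{1+f}{1+x})'(x_{0})$ are strictly negative, so the constant-$\sigma_{k}$ constraint forces $f''(x_{0})>0$, making such critical points isolated.

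The main obstacle is the monotonicity computation for $\Phi$ together with the endpoint inequalities aligning correctly with the hypothesis $c_{k}<{n\choose k}$; once those cancellations go through, the remaining steps essentially parallel Proposition~4.2.
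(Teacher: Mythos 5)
Your proposal is correct and takes essentially the same route as the paper: both reduce the three conditions (\ref{3.12}) to the system (\ref{3.13}) for $(\alpha,\beta,\lambda)$, which the paper merely asserts is uniquely solvable and which you solve explicitly by eliminating $\alpha={n\choose k}(1+\lambda)^{-k}$ and $\beta$, and applying the intermediate value theorem to the strictly decreasing function $\Phi$. The one small detail worth adding is that positivity of $f$ on $(\lambda,b']$ requires $\alpha G^{m,n}_1(x)+\beta> a_0(x)$ there (so that the monotone-in-$f$ equation (\ref{3.14}) has a positive root), which holds because the derivative of the difference equals $x^m(1+x)^{n-k}\bigl[\alpha(1+x)^k-{n\choose k}\bigr]>0$ for $x>\lambda$ with your value of $\alpha$.
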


\begin{proof}
By (\ref{3.11}), $f(x)$ satisfies (\ref{3.12}) if and only if
\begin{align}\label{3.13}
\left\{
\begin{array}{l l}
a_0(\lambda) &=
\alpha G^{m,n}_1(\lambda)+\beta,\\
a_0'(\lambda)&=\alpha (G^{m,n}_1)'(\lambda), \\
G^{m,n,k}_1(b,b')&=\alpha G^{m,n}_1(b')+\beta.
\end{array}
\right.
\end{align}

It is easy to see there is a unique solution $(\alpha, \beta, \lambda)$ to (\ref{3.13}).
\end{proof}

Following the discussion in Section 2 and 3, we conclude that the limit of $f_{t}$ in this case is

\begin{align} \label{3.17}
f_{\infty}(x) =
\begin{cases}
0, & 0\leq x\leq \lambda \\
f(x), & \lambda \leq x\leq b',
\end{cases}
\end{align}
where $f(x)$ is defined in Proposition~\ref{rrr}.

Now we are in position to prove Main Theorem 2.

\begin{proof} [Proof of Main Theorem 2]\

For simplicity, we denote $\frac{\partial f(x,t)}{\partial x}$ by $f'_{t}$ and $f(\cdot, t)$ by $f_t$.
We first prove the following

\begin{claim}\label{ppp}
There is a universal constant $C$ depending only on initial values, such that
$$f'_{t}\leq C.$$
\end{claim}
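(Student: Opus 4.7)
The plan is to deduce the uniform bound $f'_t\leq C$ from a universal upper bound on $\sigma_k$ of the inverse eigenvalues of $\chi$ with respect to $\omega$, together with a simple algebraic decomposition that extracts the $f'$ factor.

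First, by taking one time derivative of the flow equation (\ref{inversek}) and applying the parabolic maximum principle to $\partial_t\varphi$, one obtains the two-sided bound
$$0<C_1\leq \sigma_k\bigl(\underbrace{\tfrac{1+f}{1+x}}_{n},\,\underbrace{\tfrac{f}{x}}_{m},\,f'\bigr)\leq C_2,$$
in direct analogy with (\ref{1.8}) and (\ref{bound1}), where $C_1,C_2$ depend only on the initial data. Second, I expand the symmetric polynomial by separating the $f'$ factor:
$$\sigma_k\bigl(\underbrace{\tfrac{1+f}{1+x}}_{n},\,\underbrace{\tfrac{f}{x}}_{m},\,f'\bigr) = f'\cdot\sigma_{k-1}\bigl(\underbrace{\tfrac{1+f}{1+x}}_{n},\,\underbrace{\tfrac{f}{x}}_{m}\bigr) + \sigma_k\bigl(\underbrace{\tfrac{1+f}{1+x}}_{n},\,\underbrace{\tfrac{f}{x}}_{m}\bigr).$$
Since $f\geq 0$ and $0\leq x\leq b'$, the ratio $\tfrac{1+f}{1+x}\geq \tfrac{1}{1+b'}$ is uniformly bounded below. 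When $k-1\leq n$, choosing the $k-1$ indices in $\sigma_{k-1}$ from the $n$ copies of $\tfrac{1+f}{1+x}$ gives the uniform lower bound $\sigma_{k-1}(\cdots)\geq \binom{n}{k-1}(1+b')^{-(k-1)}$. Combined with $\sigma_k\leq C_2$ and the nonnegativity of the remaining summand, this yields $f'\leq C$ with $C=C_2(1+b')^{k-1}/\binom{n}{k-1}$.

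The main obstacle I foresee is the range $k-1>n$, in which the decomposition forces $\sigma_{k-1}$ to contain at least $k-1-n$ factors of $\tfrac{f}{x}$; this quantity tends to $f'(0,t)$ near $x=0$ and can degenerate as $t\to\infty$ in the singular regime. However, for $k>n$ one is precisely in the smooth-convergence case of Theorem~\ref{main2}, so I would either bootstrap from smoothness of $\chi_t$, or apply the parabolic maximum principle directly to $f'$ via the equation obtained by differentiating (\ref{evolution}) in $x$, using the strong concavity assumptions (\ref{concavity}) on $F$ to close the estimate at an interior space-time maximum.
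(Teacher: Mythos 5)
Your first step (the two-sided bound on $\sigma_k$ from differentiating the flow in time and applying the maximum principle) and your decomposition $\sigma_k = f'\,\sigma_{k-1}(\cdots)+\sigma_k(\cdots)$ reproduce the paper's argument for $k\leq n$: the paper likewise isolates the monomial $(\tfrac{1+f}{1+x})^{k-1}f'$ and uses $\tfrac{1+f}{1+x}\geq\tfrac{1}{1+b'}$. Your version in fact covers $k\leq n+1$, slightly more than the paper's first case. Up to there the proposal is correct and essentially identical to the paper.

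The genuine gap is the remaining range $k\geq n+2$ (non-vacuous since $k$ may be as large as $m+n+1$). You correctly identify the obstacle — every monomial of $\sigma_{k-1}$ must then contain at least one factor $\tfrac{f}{x}$, which a priori could degenerate near $x=0$ — but neither of your proposed fixes closes it. Bootstrapping from ``the smooth-convergence case of Theorem~\ref{main2}'' is circular: the bound $f'_t\leq C$ is precisely an ingredient in the proof of that smooth convergence, so you cannot invoke the conclusion here. Applying the maximum principle to $f'$ after differentiating (\ref{evolution}) in $x$ is not carried out and is far from automatic, since the operator degenerates at the spatial boundary (where $u''\to 0$), which is exactly where the maximum of $f'$ threatens to sit. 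The missing idea is the paper's Claim~\ref{qqq}: a uniform \emph{lower} bound $\tfrac{f_t(x)}{x}\geq C(C_1,k,m,n,b,b')>0$, obtained by evaluating the lower bound in (\ref{bound2}) at the point $x_0$ where $\tfrac{f_t}{x}$ attains its minimum — there $f'_t(x_0)=\tfrac{f_t(x_0)}{x_0}$ by (\ref{3.16}), so every entry of $\sigma_k$ is controlled by $\tfrac{f_t(x_0)}{x_0}$ and the inequality $C_1\leq\sigma_k$ forces the lower bound. Once $\tfrac{f}{x}$ and $\tfrac{1+f}{1+x}$ are both bounded below, your own decomposition yields $f'\leq C_2/\sigma_{k-1}(\cdots)\leq C$ in all cases. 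Without some substitute for this lower bound, your argument does not establish the claim for $k\geq n+2$.
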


\begin{proof}
The two-sided bound (\ref{1.8}) leads to
\begin{align} \label{bound2}
C_1\leq \sigma_k(\underbrace{\frac{1+f_t}{1+x}}_{n-copies}, \underbrace{\frac{f_t}{x}}_{m-copies}, f'_t) \leq C_2.
\end{align}

We separate the proof into following two cases.\\

\emph{Case 1: $k\leq n$} \ \\
In this case, there is always a term $(\frac{1+f_t}{1+x})^{k-1}f'_t$ in $\sigma_k$, hence from (\ref{bound2}) we have
\[
(\frac{1+f_t}{1+x})^{k-1}f'_t\leq C_2.
\]
Since $f_t$ takes value in $[0,b]$, the term $\frac{1+f_t}{1+x}$ is bounded from below, from which a uniform upper for $f'_t$ follows.\\

\emph{Case 2: $k>n$} \
\begin{claim}\label{qqq}
$\frac{f_t(x)}{x}\geq C(C_1, k, m, n, b, b')>0$ for some universal constant $C$ depending only on $C_1, k, m, n, b$ and $b'$.
\end{claim}
\emph{Proof of Claim~\ref{qqq}: } For a fixed $t$, let $x_0$ be the point where $\frac{f_t(x)}{x}$ achieves its minimum. Such point exists since $\frac{f_t(x)}{x}$ is a continuous function on $[0,b']$.
At $x_0$, we have
\begin{align} \label{3.16}
f'_{t}(x_0)=\frac{f_{t}(x_0)}{x_0}.
\end{align}
Indeed, if $x_n=0$, then (\ref{3.16}) is trivially true by the fact $f_{t}(0)=0$. If $x_n$ is an interior point, then
\[
(\frac{f_{t}(x)}{x})'=\frac{f'_{t}-\frac{f}{x}}{x}.
\]
(\ref{3.16}) follows as well. If $x_0=b'$, then $\frac{f_{t}(b')}{b'}=\frac{b}{b'}$. We are done for the lower bound of $\frac{f_t(x)}{x}$. Concerning the lower bound, we can also assume that $\frac{f_{t}(x_0)}{x_0}\leq 1$. Then $\frac{1+f_{t}}{1+x}\leq 1$ as well.
Take $x=x_0$ in $\sigma_k(\underbrace{\frac{1+f_t}{1+x}}_{n-copies}, \underbrace{\frac{f_t}{x}}_{m-copies}, f'_t)$, by (\ref{3.16}) and lower bound in (\ref{bound2}), we get a uniform lower bound $C$ on $\frac{f_t(x)}{x}$ depending only on $C_1, k, m, n, b, b'$ not on $t$. We have finished the proof of Claim~\ref{qqq}.\\

We continue our proof of Claim~\ref{ppp}, Case 2. By Claim~\ref{ppp}, both $\frac{f_t(x)}{x}$ and $\frac{1+f_t(x)}{1+x}$ are bounded uniformly from below, using the upper bound in (\ref{bound2}) again, we get a uniform upper bound for $f'_t$.

We have thus finished the proof of Claim~\ref{ppp}.
\end{proof}
One more thing to mention is the handle of nonlinearity in this situation. As a matter of fact we already have seen that
\begin{align}
\sigma_k(\underbrace{\frac{1+f_t}{1+x}}_{n-copies}, \underbrace{\frac{f_t}{x}}_{m-copies}, f'_t)=\frac{G^{m,n,k}_{1}(f,x)'}{G^{m,n}_1(x)'},
\end{align}
thus the parabolic equation (\ref{evolution2}) on $f$ can be written as
\begin{align}
\frac{\partial G^{m,n,k}_{1}(f,x)}{\partial t}&= \frac{\partial G^{m,n,k}_{1}(f,x)}{\partial f} \frac{\partial f}{\partial t} \\ \notag
                                              &= \frac{\partial G^{m,n,k}_{1}(f,x)}{\partial f} (\frac{G^{m,n,k}_{1}(f,x)'}{G^{m,n}_1(x)'})' \\ \notag
                                              &= Q(f,x) (\frac{G^{m,n,k}_{1}(f,x)''}{G^{m,n}_1(x)'}-\frac{G^{m,n,k}_{1}(f,x)'G^{m,n}_1(x)''}{(G^{m,n}_1(x)')^2}),
\end{align}
which becomes a degenerate parabolic equation for $G^{m,n,k}_{1}(f,x)$. This is a generalization of treatment in Section 3 for the general $k$ case. The uniform upper bound on $\frac{\partial f}{\partial x}$ implies the uniform upper bound on $G^{m,n,k}_{1}(f,x)'$. The rest part of the convergence is similar. We omit it for simplicity.
Once we get the convergence
\[
\lim_{t\to \infty} G^{m,n,k}_{1}(f,x)=G^{m,n,k}_{1}(f_{\infty},x),
\]
we could infer that
\[
\lim_{t\to \infty} f(x,t)=f_{\infty}(x),
\] since $G^{m,n,k}_{1}(f,x)$ is monotone increasing on the $f$ variable.\\

Finally, let us discuss the geometric behavior.

First of all, if $f_{\infty}'(0)>0$, which is the case for $k>n$ and $c_k>{n \choose k} $ when $k\leq n$, then as previously discussed, we get smooth convergence.

If $f_{\infty}'(0)=0$, we have smooth convergence away from $\{\rho=-\infty\}$ which corresponds to $P_0$. Then the corresponding K\"{a}hler metric $dd^{c}v_{\infty}$ has a conical singularity of cone angle $\pi$ transverse to $P_0$. $P_0$ can be also regarded as the intersection of $m+1$ effective divisors $\mathbb{P}(\mathcal{O}_{\mathbb{P}^n}\oplus \mathcal{O}_{\mathbb{P}^n}(-1)^{\oplus m})$, which is of codimension $m+1$. Therefore the convergence of general inverse $\sigma_k$-flow can produces K\"{a}hler metrics which are singular on subvarieties of higher codimension.

If $c_k<{n\choose k}$, the limit is given by (\ref{3.17}). We can still obtain smooth convergence away from $P_0$. The limit
\[
\lim_{\rho \to -\infty}v'_{\infty}(\rho) =\lambda \neq 0
\]
corresponds to a blow up along $P_0$ in the following sense which we explain.

Away from $P_0$, we have the smooth convergence
 \[
 (X_{m,n}\setminus P_0, \chi_t) \to  (X_{m,n}\setminus P_0, \chi_{\infty}:=dd^{c} v_{\infty})
  \]
as $t\to \infty$. We consider the metric completion of $(X_{m,n}\setminus P_0, \chi_{\infty})$.  If we restrict $dd^{c}v_{\infty}$ fiber-wise, we may find its metric completion is homeomorphic to the blow up of $\mathbb{P}^{n+1}$ at one point $\rho=-\infty$. Thus globally, the metric completion of $(X_{m,n}\setminus P_0, \chi_{\infty})$ is homeomorphic to $\tilde{X}_{m,n}$, the blow up of $X_{m,n}$ along $P_0$.  Let $\pi: \tilde{X}_{m,n}\to X_{m,n}$ be the blow up map and let $E$ be the exceptional divisor, which is homeomorphic to $\mathbb{P}^n\times \mathbb{P}^m$.  Since $f_{\infty}'(\lambda)=0$, it follows that the pull back metric $\pi^{*}(dd^c v_{\infty})$ on $\tilde{X}_{m,n}$ is a K\"{a}hler metric with conical singularity of angle $\pi$ transverse to the fibre direction of $E$. Moreover, since $\lim_{\rho \to -\infty}v'_{\infty}(\rho) =\lambda$, we have
\[
\pi^{*}(dd^c v_{\infty})\in [D_H]+b'[D_{\infty}]-\lambda[E].
\]

Since $f_{\infty}$ satisfies
\[
G^{m,n,k}_1(f,x)=\alpha G^{m,n}_1(x)+\beta,
\]
for some constants $\alpha, \beta$ and $x\in[\lambda, b']$, hence we have equation
\begin{align}
\alpha \pi^{*}(\chi_{\infty})^{m+n+1}= \pi^{*}(\chi_{\infty})^{m+n+1-k}\wedge \pi^{*}(\omega)^k
\end{align}
holds on $\tilde{X}_{m,n}$ away from $E$.  $\alpha$ is the corresponding topological constant
\[
\alpha=\frac{[\chi]^{m+n+1-k}[\omega]^k}{[\chi]^{m+n+1}}
\] with $[\chi]\in b'[\pi^{*}(D_{\infty})]-\lambda[E]+[\pi^{*}(D_H)]$ and $[\omega]\in b[\pi^{*}(D_{\infty})]+[\pi^{*}(D_H)]$.
\end{proof}

\begin{rem}
Motivated by the last case of Main Theorem 2, we can also study the general inverse $\sigma_k$-flow on $\tilde{X}_{m,n}$. $N^1(\tilde{X}_{m,n})$ is spanned by
$[D_H]$, $[D_{\infty}]$ and $[E]$, where we use the same notation $[D_H]$ and $[D_{\infty}]$ to denote the pull back of corresponding divisors on $X_{m,n}$.  The class
\[
p[D_{\infty}]-q[E]+r[D_H]
\] is K\"{a}hler if and only if $p>q>0$ and $r>0$.
On a local coordinates $(z_1,\cdots z_n, \xi_0, \cdots \xi_m)=\mathbb{C}^n\times (\mathbb{C}^{m+1}\setminus \{0\})$, let
\[
e^{\rho}=h(z)|\xi|^2,
\] where $h(z)=1+|z_1|^2+|z_2|^2+\cdots+|z_n|^2$.

We consider K\"{a}hler metrics of the form
\[
r\omega_{FS}+\frac{\sqrt{-1}}{2\pi}\partial \bar{\partial} u(\rho),
\]
with $u$ satisfies proper asymptotic behavior (\ref{asymptotic}) near $\rho=-\infty$ and $\rho=\infty$. The convergence behavior is very similar to that on $\mathbb{P}^n\#\overline{\mathbb{P}^n}$ so we will omit the detail here. \end{rem}

\end{document}